\documentclass[11pt]{amsart}

\usepackage{amssymb}
\usepackage{latexsym}
\usepackage{amsmath}
\usepackage{amsthm}
\usepackage{amsfonts}
\usepackage{color}
\usepackage{graphicx}
\usepackage[left=3cm,top=4cm,right=3cm,bottom=4cm]{geometry}

\usepackage[normalem]{ ulem }

\newcommand{\R}{\mathbb{R}}
\newcommand{\N}{\mathbb{N}}
\newcommand{\Z}{\mathbb{Z}}

\newcommand{\E}{\mathcal{E}}
\newcommand{\D}{\mathcal D}
\newcommand{\T}{\mathbb{T}}
\newcommand{\e}{\epsilon}

%\long\def\comment#1{}

\theoremstyle{plain}
\newtheorem{defi}{Definition}[section]

\newtheorem{teo}[defi]{Theorem}
\newtheorem{cor}[defi]{Corollary}
\newtheorem{lema}[defi]{Lemma}

\theoremstyle{remark}
\newtheorem{remark}[defi]{Remark}

\theoremstyle{definition}

\theoremstyle{remark}

\numberwithin{equation}{section}

\begin{document}

\title[]{Some results for the large time behavior of Hamilton-Jacobi Equations with Caputo Time Derivative}

\author{Olivier Ley}
\address{
Olivier Ley: Univ Rennes, INSA Rennes, CNRS, IRMAR - UMR 6625, F-35000 Rennes, France.
\newline {\tt olivier.ley@insa-rennes.fr}
}

\author{Erwin Topp}
\address{
Erwin Topp:
Departamento de Matem\'atica y C.C., Universidad de Santiago de Chile,
Casilla 307, Santiago, CHILE.
\newline {\tt erwin.topp@usach.cl}
}

\author{Miguel Yangari}
\address{Miguel Yangari: Departamento de Matem\'atica, Escuela Polit\'ecnica Nacional, Ladr\'on de Guevara E11-253, P.O. Box 17-01-2759, Quito, Ecuador.
\newline {\tt miguel.yangari@epn.edu.ec}
}
\date{\today}

\begin{abstract}
We obtain some H\"older regularity estimates for an Hamilton-Jacobi with fractional time derivative
of order $\alpha \in (0,1)$ cast by a Caputo derivative. The H\"older seminorms are independent of time,
which allows to investigate the large time behavior of the solutions. We focus on the Namah-Roquejoffre
setting whose typical example is the Eikonal equation.
Contrary to the classical time derivative case $\alpha=1$, the convergence of the solution on the so-called projected
Aubry set, which is an important step to catch the large time behavior,
is not straightforward. Indeed, a function with nonpositive Caputo derivative for all time
does not necessarily converge; we provide such a counterexample.
However, we establish partial results of convergence under some geometrical assumptions. 
\end{abstract}

% \keywords{Integro-Differential Equations, Regularity, Comparison Principles, Large Time Behavior, Strong Maximum Principles}
%
% \subjclass[2010]{35R09, 35B51, 35B65, 35D40, 35B10, 35B40}
\maketitle

%%%%%%%%%%%%%%%%%%%%%%%%%%%%%%%%%%%%%%%%%%%%%%%%%%%%%%%
\section{Introduction.}

In this note we are interested in nonlocal Hamilton-Jacobi equations with the form
\begin{align}\label{eq}
\partial_t^\alpha u + H(x,Du) = 0 \quad \mbox{in} \ Q := \T^N \times (0, +\infty).
\end{align}
subject to the initial condition
\begin{align}\label{u0}
u(\cdot, 0) = g \quad \mbox{in} \ \T^N,
\end{align}
for some $H\in C(\T^N\times \R^N)$ and $g \in \mathrm{Lip}(\T^N)$ given.

The nonlocal nature of the problem is cast by the operator $\partial_t^\alpha$, which denotes the Caputo time derivative of order $\alpha\in(0,1)$, starting at time zero. For $\phi \in C^1(0,+\infty)$ it is defined as
\begin{align}\label{Caputo}
\partial_t^\alpha \phi(t) = \frac{1}{\Gamma(1 - \alpha)} \int_{0}^{t} \frac{\phi'(s)}{|t - s|^\alpha} ds,
\end{align}
where $\Gamma$ is the Gamma function that acts as a normalizing constant making $\partial_t^\alpha$ become the usual first order derivative when $\alpha \to 1^-$,  see~\cite{diethelm10, gkmr14} and references therein.
Following the ideas of~\cite{acv16}, where under appropriate assumption on the function $\phi$, its Caputo derivative in~\eqref{Caputo} can be equivalently computed as
\begin{equation}\label{Caputo2}
\partial_t^\alpha \phi(t) = \tilde c_\alpha \int_{-\infty}^{t} \frac{\phi(t) - \phi(s)}{|t - s|^{1 + \alpha}} ds,
\end{equation}
for some $\tilde c_\alpha > 0$, and where we have extended $\phi$ as $\phi(t) = \phi(0)$ for $t < 0$.
This operator has a nonlocal degenerate elliptic nature in the sense of Barles and Imbert~\cite{bi08} that allows to conclude comparison principle among viscosity solutions as it is proved in~\cite{ty17}. This is a powerful reason to consider Caputo derivative instead of other fractional derivatives, as, for example, Riemann-Liouville derivative defined for adequate functions $\phi$ as
\begin{equation*}
\partial_{RL}^\alpha \phi(t) = c_\alpha \frac{d}{dt} \int_{0}^{t} \frac{\phi(z)}{(t - z)^\alpha}dz.
\end{equation*}
Moreover, as it can be seen in~\cite[Chapters 5,6]{diethelm10}, Caputo derivative is more adequate to deal with the most classical notion of initial condition compared with Riemann-Liouville problems, where the initial condition is understood in a generalized sense.

Coming back to~\eqref{eq}, and more specifically to the Hamiltonian $H$, we assume throughout this paper $H(x,p)$ is periodic in $x$ and coercive in $p$. 

We focus our attention into Bellman-type Hamiltonians with the classical structure related to optimal control problems with compact control set, that is, satisfying the regularity/growth condition
\begin{equation}\label{H}%\tag{{\bf H1}}
\left \{ \begin{array}{l}
\displaystyle |H(x,p) - H(y,p)| \leq c_H(1 + |p|)|x - y|, \\
%H(x,p) \geq c_H^{-1} |p| - c_H.
\displaystyle \mathop{\rm lim}_{|p|\to +\infty}  \mathop{\rm inf}_{x\in \T^N} H(x,p)=+\infty.
\end{array}
\right .
\end{equation}
for some $c_H > 0$, and for all $x,y \in \T^N, p \in \R^N$.

We are also interested in the case $H$ has superlinear growth in the gradient, common in control problems with unrestricted control space. We refer to this case through the following assumption: there exists $m > 1$ (the ``gradient growth") and $A > 0$, $c_H > 1$ such that
\begin{equation}\label{Hsuper}%\tag{{\bf H2}}
\left \{ \begin{array}{l} 
\mu H(x, \mu^{-1} p) - H(x, p) \geq (1 - \mu) \Big{(} c_H^{-1} |p|^{m} - A\Big{)},\\
|H(x,p) - H(y,p)| \leq c_H (1 + |p|^{m})|x - y|, 
\end{array} \right .
\end{equation}
for some $c_H>0$ and all $\mu \in (0,1)$, $x, y \in \T^N, \ p \in \R^N$.
We notice at once that the first condition in~\eqref{Hsuper}
implies that there exist $C > 0$ such that
\begin{equation}\label{coerc-Hsuper}
  H(x,p) \geq C^{-1}|p|^m - C \qquad\text{for all $x\in \T^N,p\in \R^N$,}
\end{equation}
and the same inequality holds with $m=1$ if the second condition holds in~\eqref{Hsuper}
and $H$ is convex.

%we assume that the Hamiltonian $H$
%is coercive in the gradient variable with sublinear growth, and Lipschitz continuous
%with respect to $x$, that is, there exists $c_H \geq 0$ such that for all   we have

Our interest is the analysis of the behavior of the solutions to~\eqref{eq}-\eqref{u0} and make a contrast with the classical case $\alpha = 1$, namely, the Hamilton-Jacobi equation
\begin{equation}\label{HJ1}
\partial_t u + H(x,Du) = 0 \quad \mbox{in} \ Q.
\end{equation}
There is a vast literature regarding problem~\eqref{HJ1}-\eqref{u0}. We refer the surveys of Barles and Ishii in~\cite{abil13} and references therein for the basics about this problem, like existence, uniqueness and regularity.

A natural question that arises is the analysis of the behavior of the solution for long times.
There too, there are a lot of references when $\alpha=1$,
see~\cite{fathi98, nr99, bs00, fs04, ds06, ishii09, bim13, abil13} and the references therein.
Here, we focus on the
nowadays classical framework of Namah and Roquejoffre paper~\cite{nr99},
where the authors address the long time behavior of~\eqref{HJ1} under the assumption
that 
\begin{equation}\label{NR}
\left \{
\begin{split}
& \text{$H(x,p) = F(x,p) - f(x)$ for all $(x,p)$, with $F$, $f$ continuous,} \\    
& \text{$F(x,\cdot)$ is convex for all $x\in \T^N$,}\\    
& F(x,p) > F(x,0)=0 \quad \mbox{for all} \ x \in \T^N, p \in \R^N\setminus\{0\},\\
& f\geq 0 \text{ in $\T^N$}.
\end{split}
\right .
\end{equation}
It is possible to generalize slightly the above assumptions but we choose
to state them in this form since all the main difficulties are present.

Under these assumptions,~\eqref{eq} reads
\begin{align}\label{eq-new}
\partial_t^\alpha u + F(x,Du) = f(x),
\end{align}
$f\in {\rm Lip}(\T^N)$ and
\begin{eqnarray}\label{zerof}
Z:=\{x\in\T^N: f(x)=\mathop{\rm min}_{\T^N}f\},
\end{eqnarray}
the so-called projected Aubry set \cite{fs04}, which is a compact subset of $\T^N$.
It follows from Lions, Papanicolaou and Varadhan~\cite{lpv86} 
that
the so-called \textsl{ergodic problem}
\begin{equation}\label{ergodic}
F(x, Dv) = f(x)+c \quad x \in \T^N,
\end{equation}
has a solution $(c,v)\in \R\times W^{1,\infty}(\T^N)$ and $c$ is unique.
Actually, under our assumptions, it is easy to see that
$c= -\mathop{\rm min}_{\T^N}f$.

%Without loss of generality, we assume that $\mathop{\rm min}_{\T^N}f=0$
%and set
%\begin{eqnarray}\label{zerof}
%Z:=\{x\in\T^N: f(x)=0\}.  
%\end{eqnarray}

It is then expected that the long time behavior of the solution $u$ of~\eqref{eq}-\eqref{u0}
is given by the asymptotic expansion
\begin{eqnarray}\label{asympt-exp}
u(x,t)+ ct^\alpha = v(x)+o(1), \quad \text{$o(1)\to 0$ uniformly as $t\to +\infty$},
\end{eqnarray}
where $(c,v)$ is a solution of~\eqref{ergodic}.

In the local case $\alpha=1$, this asymptotic behavior is established
in~\cite[Theorem 1]{nr99}. The proof relies
basically on three steps.
The first step is to obtain that the set $\{u(\cdot,t)+ct, t\geq 0\}$ is relatively
compact in $W^{1,\infty}(\T^N)$.
Let us point out that this property is not sufficient and
the main difficulty is to prove the full convergence of $u(\cdot, t)+ct$ as $t \to +\infty$. 
The second step is to notice
that $\partial_t (u(x,t)+ct)\leq 0$ for $x\in Z$, from which one infers
that $u(\cdot,t)+ct$ is nonincreasing in time
on $Z$, so it converges uniformly to a Lipschitz continuous
function $\phi$ on $Z$ as $t\to +\infty$.
The third step is to take the half-relaxed limits
\begin{eqnarray*}
  \overline{u}(x)=\mathop{\rm lim\,sup}_{y\to x,s\to t,\e\to 0}u(y,\frac{s}{\e})+c\frac{s}{\e}
  \quad \text{and}\quad
 \underline{u}(x)=\mathop{\rm lim\,inf}_{y\to x,s\to t,\e\to 0}u(y,\frac{s}{\e})+c\frac{s}{\e},
\end{eqnarray*}
which are, respectively, a sub- and a supersolution of~\eqref{ergodic}
and then to apply a strong comparison result for~\eqref{ergodic} with
the ``Dirichlet boundary condition'' $\overline{u}=\underline{u}=\phi$ on $Z$.
It follows $\overline{u}=\underline{u}$ in $\T^N$, which gives the
desired full convergence.

Our goal is to develop a similar procedure for the Caputo fractional case
$\alpha\in (0,1)$.
The first step holds.
Indeed, the elliptic properties shown by the expression~\eqref{Caputo2} are not only related to comparison principles but also to regularity in the time variable. Using Ishii-Lions method for nonlocal problems presented in~\cite{bci11}, we are able to prove that bounded solutions to~\eqref{eq} are $\alpha$-H\"older continuous in time, uniformly in $Q$, see Theorem~\ref{teoHoldert}.
We recall that such a property does not come from the contraction principle given by comparison arguments as in the classical case, basically because such a property is not known for fractional problems, where the influence of the "memory" put troubles in the analysis of problems shifted in time. Moreover, in the local case~\eqref{HJ1}, Lipschitz estimates in time allows to use Rademacher's Theorem to regard $u_t$ as an $L^\infty$ function, and extract the boundedness of $Du$ through the coercivity of $H$. Such a program cannot be carried out directly in~\eqref{eq} since $\alpha$-H\"older functions are not sufficient to make $\partial_t^\alpha u$ in $L^\infty$. Nevertheless, we can get equicontinuity in the space variable by a regularization procedure via inf and sup convolutions, see Theorem~\ref{holderx}.

Similarly, the third step is identical to the
one in the classical case since the ergodic problem~\eqref{ergodic}
is the same for all $\alpha\in (0,1]$.

It follows that the limiting step is the second one. We still have
$\partial_t^\alpha (u(x,t)+ct^\alpha)\leq 0$ for $x\in Z$ but it is not anymore
sufficient to infer neither that $u(\cdot,t)+ct^\alpha$ is nonincreasing in time on $Z$,
nor that it converges.
As we show in Section~\ref{seccounterexample},
it is possible to have bounded functions with signed $\alpha$-order Caputo derivative,
but  not converging as $t \to \infty$, which is a surprising result interesting by itself.

To overcome this difficulty and obtain the convergence on $Z$,
we need to give additional assumptions on the geometry of $Z$ and ${\rm argmin}\{g\}$.
Precise assumptions are stated in Section~\ref{secLTB} but basically, we assume
\begin{eqnarray}
  && \label{argminu0} Z\cap {\rm argmin}\{g\}\not= \emptyset,
  \\
  && \label{courbe-connect}\text{for any $z\in Z$, there exists a rectifiable curve in $Z$
      joining  ${\rm argmin}\{g\}$ and $z$.}
\end{eqnarray}
These assumptions are inspired from the classical case, where it is known that
the solution $u$ of~\eqref{HJ1}
is the value function of an optimal control problem for which
assumptions~\eqref{argminu0}-\eqref{courbe-connect} means roughly that we can
travel with minimal cost on $Z$.
But let us point out that these geometrical assumptions are not required in the
classical case and that there is no rigorous link between the fractional case
and the expected control problem, which could make these ideas rigorous.
See Camilli, De Maio, Iacomini~\cite{cdi18}
for the precise statement of a related control problem which should be associated
with the case $\alpha\in (0,1)$, and some discussion in this direction.

It follows that we need to translate these ideas in the PDE framework
building some suitable supersolutions, which tends to 0 thanks
to~\eqref{argminu0}-\eqref{courbe-connect}. By comparison, we obtain
estimates
\begin{eqnarray*}
&& \mathop{\rm min}_{\T^N}g \leq u(z,t)+ct^\alpha\leq \mathop{\rm min}_{\T^N}g+ {\rm Lip}(g)\, {\rm length}(\gamma)\, \E(t),
\quad \text{for any $z\in Z$,}  
\end{eqnarray*}
where $\gamma$ is a rectifiable curve on $Z$ joining ${\rm argmin}\{g\}$ and $z$,
and $\E$ is the solution of a fractional ODE with limit 0 at $+\infty$; see Theorem~\ref{cvusurZ1}
and Theorem~\ref{cvZ-eikonal} for an extension to some possibly infinite length curves.
This implies the convergence of $u$ on $Z$ from which we deduce easily~\eqref{asympt-exp},
see Corollary~\ref{ltb-caputo}.
This approach is new but we think it does not provide optimal results. In particular,
it relies too heavily on the geometry of $Z$, which is not the case in the
classical approach described above for $\alpha=1$. To go further, one
would need some quantitative estimates on how much nonpositive is 
$\partial_t^\alpha (u(x,t)+ct^\alpha)$, which seems difficult to obtain,
even in the case $\alpha=1$.

The paper is organized as follows. We start by introducing precisely
the Caputo fractional operator and recalling some useful properties.
Then we establish some regularity estimates for the solution of~\eqref{eq}
in Section~\ref{sec:reg}.
Section~\ref{seccounterexample} is devoted to a counterexample showing that a
bounded function with nonnegative Caputo derivative does not necessarily
converges. Finally, some positive results for the large time behavior
of the solution of~\eqref{eq} are proved in Section~\ref{secLTB}.
\medskip

\noindent{\bf Notations and preliminaries.}
We will write the Caputo derivative using~\eqref{Caputo2} (with $\tilde{c}_\alpha=1$ for simplicity).
More precisely, let $\phi: (0,+\infty)\to \R$. We extend $\phi$ to $(-\infty,0)$ by setting $\phi(s)=\phi(0)$ for $s<0$
and define, when it exists, for every $t> 0$ and $0<\delta <t$,
\begin{equation}\label{Caputo2-not}
  \partial_t^\alpha \phi(t) = \int_{-\infty}^{t} \frac{\phi(t) - \phi(s)}{|t - s|^{1 + \alpha}} ds
  = \partial_t^\alpha[t-\delta]\phi(t)+ \partial_t^\alpha[t-\delta,t]\phi(t),
\end{equation}
where, for $a<b\leq t$, we set
\begin{eqnarray*}
\partial_t^\alpha[a]\phi(t):= \int_{-\infty}^{a} \frac{\phi(t) - \phi(s)}{|t - s|^{1 + \alpha}} ds,
\qquad
\partial_t^\alpha[a,b]\phi(t)=  \int_{a}^b \frac{\phi(t) - \phi(s)}{|t - s|^{1 + \alpha}} ds.
\end{eqnarray*}
Notice that $\partial_t^\alpha \phi(t)$ is well-defined as soon as $\phi\in L_{\rm loc}^1(0,+\infty)$
and $\phi$ is $C^1$ in a neighborhood of $t$.
More about the functional formulation of this operator can be found in the references~\cite{kst06, diethelm10, gkmr14}. For the definition of viscosity solutions to~\eqref{eq}, we refer to~\cite{bi08, ty17}.

Notice that for all $\beta >0$, there exists $c_{\alpha,\beta}>0$ such that
\begin{eqnarray}\label{cst-deriv}
  \partial_t^\alpha t^{\beta} = c_{\alpha,\beta} t^{\beta -\alpha},
  \quad\text{for all $t\geq 0$. \quad \cite[Appendix B]{diethelm10}}
\end{eqnarray}
We introduce the Mittag-Leffler functions $E_\alpha (z)$ of order $\alpha$
as in~\cite[Definition 4.1]{diethelm10}. 
Recall that $E_\alpha$ is smooth on $\R$, $E_\alpha(0)=1$ and we have the following useful properties
\begin{eqnarray}
&& t\in [0,+\infty)\mapsto E_\alpha (-t) \text{ is positive, convex and nonincreasing, \cite[Section 6.2]{simon14},}\\
&&  \label{mlf}
\partial_t^\alpha E_\alpha (\lambda t^\alpha)=\lambda E_\alpha (\lambda t^\alpha),
\text{ for all $t> 0$ and $\lambda\in\R$, \cite[Theorem 4.3]{diethelm10},}\\
&& \label{estim-mlf}\frac{1}{\Gamma (1-\alpha)t}\leq E_\alpha (-t)\leq \frac{1}{\Gamma (1+\alpha)^{-1}t}
\text{ for all $t\geq 0$, \cite[Theorem 4]{simon14}.}
\end{eqnarray}
We will write $\widehat{x}=x/|x|$ for every $x\in\R^N\setminus\{0\}$.

%%%%%%%%%%%%%%%%%%%%%%%%%%%%%%%%%%%%%%%%%%%%%%%%%%%%%%%%%%%%%%%%%%%%%%%%%%%%%
%%%%%%%%%%%%%%%%%%%%%%%%%%%%%%%%%%%%%%%%%%%%%%%%%%%%%%%%%%%%%%%%%%%%%%%%%%%%%

\section{Regularity Estimates}
\label{sec:reg}
%The following result can be readily adapted from the arguments given in~\cite{ty17} in the case of initial condition at time $\tau = 0$.
%\begin{prop}[Comparison Principle]\label{propcomparacion}
%Let $\tau, T \in \R$ with $\tau < T$, and $u, v$ be bounded viscosity sub and supersolution to the problem
%\begin{equation}\label{eqtau}
%\begin{split}
%_\tau \! \partial^\alpha u - \I u + H(x, Du) = 0 \quad \mbox{in} \ \R^N \times (\tau, T]
%\end{split}
%\end{equation}
%with $u(x,\tau) \leq v(x, \tau)$ for all $x \in \R^N$. Then, $u \leq v$ in $\R^N \times [\tau, T]$.
%
%{\bb Moreover, if $u, v: (-\infty, T]$ are bounded viscosity sub and supersolution to problem~\eqref{eqtau} with $\tau = -\infty$, and $u, v$ satisfy
%\begin{align*}
%\limsup_{t \to -\infty} u(x,t) \leq \liminf_{t \to -\infty} v(x,t)
%\end{align*}
%uniformly in $x \in \R^N$, then $u \leq v$ in $\R^N \times (-\infty, T]$.
%}
%\end{prop}

We start with a regularity result in time for bounded solutions to~\eqref{eq}. This result is a consequence of the H\"older estimates reported by Barles, Chasseigne and Imbert in~\cite{bci11}.

%%%%
\begin{teo}\label{teoHoldert}
Assume~\eqref{H} or~\eqref{Hsuper}, and $g \in \mathrm{Lip}(\T^N)$.
Let $u$ be bounded, continuous  viscosity solution to~\eqref{eq}-\eqref{u0}.
Then, $u$ is H\"older continuous in time, that is there exists a constant $L > 1$ large enough such that
\begin{equation}\label{holder-en-t}
|u(x,s) - u(x,t)| \leq L |s - t|^\alpha, \quad s, t \geq 0.
\end{equation}
The constant $L$ depends on the data and $||u||_\infty$ but does not depend on $t$.
\end{teo}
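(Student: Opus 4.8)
The plan is to combine the comparison principle for~\eqref{eq} established in~\cite{ty17} with the interior H\"older estimates for nonlocal equations of Barles, Chasseigne and Imbert~\cite{bci11}. Writing the Caputo operator as in~\eqref{Caputo2-not}, equation~\eqref{eq} is a degenerate integro-differential equation in the variable $\xi=(x,t)\in\T^N\times(0,+\infty)$ whose nonlocal part acts only in the $t$-direction, against the one-sided singular kernel $|z|^{-1-\alpha}\mathbf{1}_{\{z<0\}}$ --- which is non-degenerate in that direction --- while the Hamiltonian is coercive in the remaining variables by~\eqref{H} (or~\eqref{coerc-Hsuper}). Since~\cite{bci11} gives \emph{interior} estimates, I would first treat the behaviour near $t=0$ and then apply~\cite{bci11} in the region where $t$ stays away from $0$, checking that the resulting constant does not deteriorate as $t\to+\infty$ (the equation is autonomous and $\partial_t^{\alpha}$ has the scaling of~\eqref{cst-deriv}, which should make such a uniform bound available).

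For the initial layer I would argue by explicit barriers. Given $x_0\in\T^N$, set $\psi^{\pm}(x,t)=g(x_0)\pm\mathrm{Lip}(g)\,|x-x_0|\pm K t^{\alpha}$. By~\eqref{cst-deriv}, $\partial_t^{\alpha}(Kt^{\alpha})=c_{\alpha,\alpha}K$ is constant in $t$; since $\psi^{\pm}$ is $\mathrm{Lip}(g)$-Lipschitz in $x$, any smooth function touching it has $x$-gradient of norm at most $\mathrm{Lip}(g)$, so $|H(x,D_x\phi)|$ is bounded using the continuity and coercivity of $H$, and for $K$ large depending only on $\mathrm{Lip}(g)$ and on $H$, $\psi^{+}$ is a supersolution and $\psi^{-}$ a subsolution of~\eqref{eq}, both correctly ordered with $g$ at $t=0$. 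Comparison~\cite{ty17} and the choice $x_0=x$ then give $|u(x,t)-g(x)|\le Kt^{\alpha}$ for all $x,t$. This already yields~\eqref{holder-en-t} when $\min\{s,t\}$ is of order $|s-t|$ or smaller, and it is also what lets one close the interior argument when a maximum point reaches $t=0$.

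It remains to prove, for $0<s<t$ with $t-s$ small compared to $t$, the estimate $u(x,t)-u(x,s)\le L|t-s|^{\alpha}$ with $L$ independent of $t$; this is the heart of the matter, and is exactly the Ishii--Lions scheme of~\cite{bci11}. One doubles the time variable, and --- in order to reach $D_x u$ through the equation --- the space variable as well, penalising by $L|t-s|^{\alpha}+|x-y|^{2}/(2\e)$ together with a localisation, then writes the sub/supersolution inequalities (in the sense of~\cite{bi08,ty17}) at a maximum point $(\bar x,\bar t,\bar y,\bar s)$ and subtracts. On one side one is left with a difference of the nonlocal operator~\eqref{Caputo2} evaluated on the smooth time-penalisation, and the whole point is to show that, after careful bookkeeping of the one-sided kernel and of the constant extension $u(x,\cdot)=g(x)$ on $(-\infty,0)$ built into the Caputo derivative, this quantity has the sign and size needed to beat the error $c_H(1+|p|)|\bar x-\bar y|$ coming from~\eqref{H}, which tends to $0$ as the doubling parameter goes to $0$.

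I expect the genuinely delicate point to be that the H\"older exponent sought, $\alpha$, coincides with the order of the Caputo operator --- the critical regime for this method --- so the choice of penalisation and the estimate of the nonlocal term on it have to be done with care, and the one-sided, zero-anchored kernel prevents a naive use of the symmetric arguments. In the superlinear case~\eqref{Hsuper} there is a further twist: the crude error $c_H(1+|p|^{m})|\bar x-\bar y|$ need not vanish, and one must instead use the first inequality in~\eqref{Hsuper} in the two viscosity inequalities to create an absorbing term of order $c_H^{-1}|p|^{m}$, as is standard for superlinear first-order Hamilton--Jacobi equations. Once the contradiction is obtained for the penalised problem, sending the penalisation and localisation parameters to their limits, and using the symmetry of $|t-s|^{\alpha}$, gives~\eqref{holder-en-t}.
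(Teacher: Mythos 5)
Your plan follows the same route as the paper: barriers $g(x)\pm Ct^{\alpha}$ built from~\eqref{cst-deriv} and comparison to handle the initial layer, then a contradiction argument with doubled variables, penalisation $L|s-t|^{\alpha}+\epsilon^{-2}|x-y|^{2}$ and a localisation, with the superlinear case treated by the absorption coming from the first inequality in~\eqref{Hsuper} (implemented in the paper by comparing $\mu u$ with $u$, $\mu\uparrow 1$). But the proposal stops exactly where the proof lives. You state that the nonlocal term evaluated on the time penalisation must ``have the sign and size needed'' and you flag the coincidence of the sought exponent with the order $\alpha$ as the delicate point, without resolving it; that estimate is the engine of the whole argument. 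Concretely, at a maximum point with $\bar s\neq\bar t$ one splits the Caputo integrals at $\delta=|\bar s-\bar t|/2$ and computes the contribution of the penalisation over $[\bar s-\delta,\bar s]$ and $[\bar t-\delta,\bar t]$; by symmetry of the kernel the two pieces combine into
\begin{equation*}
-L\int_{-\delta}^{\delta}\frac{|\bar s-\bar t+y|^{\alpha}-|\bar s-\bar t|^{\alpha}-\alpha|\bar s-\bar t|^{\alpha-1}\widehat{\bar s-\bar t}\,y}{|y|^{1+\alpha}}\,dy\;\geq\;\frac{\alpha(1-\alpha)}{2-\alpha}\,L,
\end{equation*}
by a second-order Taylor expansion, the bound being \emph{independent of} $|\bar s-\bar t|$. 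The matching of exponents is therefore not an obstacle but precisely the mechanism making the good term scale like $L$ with a dimensionless constant; without this computation there is nothing to beat the $O(\|u\|_{\infty})+O(\epsilon)$ right-hand side and no contradiction.

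A second essential ingredient is missing and cannot be dismissed as ``careful bookkeeping''. On the intermediate range $[\bar s-1,\bar s-\delta]$ (resp. $[\bar t-1,\bar t-\delta]$) a crude bound of the integrand by $\|u\|_{\infty}$ costs $C\|u\|_{\infty}\delta^{-\alpha}$, and since $|\bar s-\bar t|\leq C_{0}L^{-1/\alpha}$ at the maximum point, this is of order $L\|u\|_{\infty}$ and would swallow the good term $cL$ above. The paper avoids this by combining the two intermediate integrals and invoking the maximum inequality with \emph{simultaneous} time shifts, $\Phi(\bar x,\bar y,\bar s,\bar t)\geq\Phi(\bar x,\bar y,\bar s+y,\bar t+y)$, under which the $L|s-t|^{\alpha}$ and $\epsilon^{-2}|x-y|^{2}$ penalisations cancel and only the localisation $\psi_{\beta}$ survives, giving a harmless $-o_{\beta}(1)$; only the tail beyond distance $1$ is bounded by $C\|u\|_{\infty}$. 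This also forces the viscosity inequalities to be written in the split form~\eqref{Caputo2-not}, with $u$ itself (not the test function) appearing in the far part. Your proposal does not identify this cancellation, and together with the unproven key lower bound above, these are genuine gaps rather than routine details.
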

%%%%

%%%%%%%%%%%%%%%%%%%%%%%%%%%%%%
\begin{proof}
Despite the proof we present here is valid for both~\eqref{H} and~\eqref{Hsuper}, we
underline the arguments in the later case. Let $\mu \in (0,1]$ be away from zero, and denote $\bar u = \mu u$. If~\eqref{Hsuper} is assumed, then we consider $\mu < 1$, and if~\eqref{H} is assumed, then $\mu = 1$ in what follows.

Using the linearity of the Caputo derivative, it is direct to check that $\bar u$ solves
\begin{equation}\label{eqmu}
\partial^\alpha_t \bar u + \mu H(x, \mu^{-1}D\bar u) = 0 \quad \mbox{in} \ Q,
\end{equation}
with initial condition $\bar u(\cdot, 0)=\mu g$.

By~\eqref{cst-deriv} and $g \in \mathrm{Lip}(\T^N)$,
we have that $g(x) \pm Ct^\alpha$
are respectively super- and subsolutions of~\eqref{eq}-\eqref{u0} for $C > 0$ large enough
depending only on $g$ and $H$.
Therefore, a direct application of the comparison results in~\cite{ty17} leads to the estimates
\begin{equation}\label{subsupsol}
g(x) - Ct^\alpha \leq u(x,t) \leq g(x) + C t^\alpha, \quad \mbox{for all} \ t \geq 0.
\end{equation}
These bounds can be readily adapted to $\bar u$ by multiplying by $\mu$ the last inequality.
 
By contradiction, assume that for all $L > 1$ we have
\begin{align}\label{contra1}
\sup_{x \in \T^N, s, t  \geq 0} \{ u(x,s) - u(x,t) - L|s - t|^\alpha \} =: \theta_L > 0.
\end{align}
Taking $\mu$ very close to $1$ in term of $\theta_L$ above, we can get
\begin{equation*}
\sup_{x \in \T^N, s, t  \geq 0} \{ \bar u(x,s) - u(x,t) - L|s - t|^\alpha \} = \theta_L/2 > 0.
\end{equation*}
%which is the perturbed version of the original contradiction condition~\eqref{contra1}. {\br Thus, by abuse of notation we write $u$ instead of $\bar u$ in the analysis below (and that can be easily followed by its evaluation in $(x,s)$).}

For localization purposes, we introduce a function $\psi_\beta$ with the following properties: we consider $\psi:\R \to \R$ smooth and nondecreasing, such that $\psi(t) = 0$ if $t \leq 1$, $\psi(t) \geq 2||u||_\infty$ if $t \geq 2$, and for $\beta > 0$ small we denote $\psi_\beta(t) = \psi(\beta t)$. Then, for all $\beta$ small enough, we have
\begin{align*}
\max_{x \in \R^N, s, t  \in \R} \{ \bar u(x,s) - u(x,t) - L|s - t|^\alpha - \psi_\beta(s) \} = \theta_L/4  > 0,
\end{align*}
where we recall that we extend $u(x,t)$ as $g$ for negative times $t$.

Then, for $\beta, \epsilon > 0$ small we define
\begin{align*}
\Phi(x,y,s,t) := \bar u(x,s) - u(y,t) - L|s - t|^\alpha - \epsilon^{-2}|x - y|^2 - \psi_\beta(s).
\end{align*}
We have that $\Phi$ attains its maximum at a point $(\bar x, \bar y, \bar s, \bar t) \in \bar Q^2$
and this maximum is bigger than $\theta_L/4$. 

Standard arguments lead to 
$$
|\bar x - \bar y| \leq \epsilon \omega_\beta( \epsilon), \quad |\bar s|, |\bar t| \leq 2/\beta,
$$ 
where $\omega_\beta(\epsilon) \to 0$ as $\epsilon \to 0$ if $\beta$ is fixed ($\omega_\beta$ is a modulus
of continuity in space of $u$ in the compact set $\T^N \times [0, 2/\beta]$), and 
\begin{align}\label{s-t1}
|\bar s - \bar t| \leq C_0 L^{-1/\alpha},
\end{align}
for some constant $C_0 > 0$ just depending on $||u||_\infty$. In particular, for $L$ large enough we may assume that $|\bar s - \bar t| < 1$.

Below, we use a
constant $C$, which may vary line to line but only depend on the data of the problem and not on $\epsilon, \beta$ nor $\mu$.

%since $u$ is continuous in $[0,T]$ we have that for $L, \beta$ fixed.

Here we claim that $\bar s, \bar t > 0$ for all $L$ large, and $\epsilon$ small in terms of $L$. In fact, if $\bar s = 0$ (the case $\bar t = 0$ is analogous), then, using~\eqref{subsupsol}, we see that
\begin{align*}
  \theta_L/4 < \Phi(\bar x, \bar y, \bar s, \bar t) \leq \mu g(\bar x) - g(\bar y) + (C_0-L) \bar t^\alpha
  \leq  C_0 L_0 \epsilon \ \omega_\beta( \epsilon) + C_0 (1 - \mu) + (C_0-L) \bar t^\alpha,
\end{align*}
where $L_0$ is the Lipschitz constant of $g$.
Taking $L\geq C_0$, we arrive at
\begin{align*}
\theta_L/4 \leq C \epsilon + C(1 - \mu),
\end{align*}
which is a contradiction if $\epsilon$ is taken small, and $\mu$ close to $1$ in terms of $L$.

In addition, since $u$ is uniformly continuous in space, then $\bar s \neq \bar t$  for all $L$ large enough and $\epsilon$ small in terms of $L$. Indeed, if $\bar s = \bar t$, then we would have
\begin{equation*}
0 < \theta_L /4 \leq \bar u(\bar x, \bar t) - u(\bar y, \bar t) \leq \omega_\beta(\epsilon) + C_0 (1 - \mu).
\end{equation*}
Hence, for $L$ and $\beta$ fixed,
taking $\epsilon$ small and $\mu$ close to $1$, we arrive at a contradiction.

%{\br Moreover, we claim that for all $L$ large enough and $\epsilon$ small in terms of $L$, we have 
%\begin{align}\label{claim1}
%|\bar s - \bar t|/2 \leq (\bar s \wedge \bar t).
%\end{align}
%
%To prove this, without loss of generality we can assume that $\bar s \wedge \bar t = \bar s < \bar t = \bar s \vee \bar t$. Notice that we also can assume that $\bar s, \bar t < 1$ since otherwise the claim follows at once in view of~\eqref{s-t1}.
%Using the maximality of $(\bar x, \bar y, \bar s, \bar t)$ and taking $\epsilon$ very small in terms of $L$ we have
%\begin{align*}
%0 \leq 2C_0 \bar t^\alpha - L|\bar s - \bar t|^\gamma, 
%\end{align*}
%that is
%\begin{align*}
%|\bar s - \bar t| \leq (2C_0/L)^{1/\gamma} \bar t^{\alpha/\gamma}.
%\end{align*}
%
%In particular, taking $L$ large enough in terms of $C_0$ and $\gamma$, and since $\gamma < \alpha$, we can say that $|\bar s - \bar t| \leq \bar t$.
%Moreover, using that $\bar t - \bar s \leq |\bar s - \bar t|$ we can also write
%\begin{align*}
%\bar t - \bar s \leq (2C_0/L)^{1/\gamma} \bar t,
%\end{align*}
%from which we get
%\begin{align*}
%\bar t (1 - (2C_0/L)^{1/\gamma}) \leq \bar s,
%\end{align*}
%and taking $L$ large in terms of $C_0$ and $\gamma$ we obtain $1/2 \bar t \leq \bar s$. This concludes~\eqref{claim1}.}

Since $\bar s, \bar t > 0$ and $\bar s \neq \bar t$, we can use the penalization defining $\Phi$ as a testing for $u$. For this, in what follows we write $\phi(x,y,s,t) := L|s - t|^\alpha + \epsilon^{-2}|x - y|^2 + \psi_\beta(s)$, from which we see that 
$$
\Phi(x,y,s,t) = \bar u(x,s) - u(y,t) - \phi(x,y,s,t).
$$
At this point, denoting $\phi_1(x,s) = \phi(x, \bar y, s, \bar t)$ we notice that the function
\begin{align*}
(x,s) \mapsto \bar u(x,s) - (u(\bar y, \bar t) + \phi_1(x,s))
\end{align*}
has a local maximum point at $(\bar x, \bar s)$, from which we can use the subsolution's viscosity inequality to write, for all $\delta > 0$, that
%Thus, we can use $(\bar x, \bar s)$ test point for $u$ with test function $\phi_1(x,s) := \phi(x, \bar y, s, \bar t)$  to conclude that for each $\delta_1, \rho > 0$ we can write
\begin{align*}
\partial_t^\alpha[\bar s - \delta]\bar u(\bar x, \cdot)(\bar s) + \partial_t^\alpha[\bar s - \delta, \bar s]\phi_1(\bar x, \cdot)(\bar s)  + \mu H(\bar x, \mu^{-1} D\phi_1(\bar x, \bar s)) \leq 0. 
\end{align*}
Similarly, denoting $\phi_2(y,t) = \phi(\bar x, y, \bar s, t)$ we notice the function
\begin{align*}
(y,t) \mapsto u(y,t) - (\bar u(\bar x, \bar s) - \phi_2(y,t))
\end{align*}
has a local minimum point at $(\bar y, \bar t)$, from which we can use the supersolution's viscosity inequality to write,
for all $\delta > 0$, that
\begin{align*}
  \partial_t^\alpha[\bar t - \delta]u(\bar y, \cdot)(\bar t) + \partial_t^\alpha[\bar t - \delta, \bar t](-\phi_2)(\bar y, \cdot)(\bar t)
  + H(\bar y, D(-\phi_2)(\bar y, \bar t)) \geq 0. 
\end{align*}
Then, we subtract both inequalities to arrive at
\begin{align}\label{testeo}
\mathcal D \leq \mathcal H,
\end{align}
where, noticing that $D(-\phi_2)(\bar y, \bar t) -=D\phi_1(\bar x, \bar s) = 2(\bar x-\bar y)/\epsilon^2$, we write
\begin{align*}
\mathcal D = & \partial_t^\alpha[\bar s - \delta]\bar u(\bar x, \cdot)(\bar s) + \partial_t^\alpha[\bar s - \delta, \bar s]\phi_1(\bar x, \cdot)(\bar s)
 - \partial_t^\alpha[\bar t - \delta]u(\bar y, \cdot)(\bar t) + \partial_t^\alpha[\bar t - \delta, \bar t]\phi_2(\bar y, \cdot)(\bar t).\\
%I = & \I_\rho^\rho(u, \phi_1, \bar x, \bar s) - \I_\rho^\rho(u, \phi_2, \bar y, \bar t) \\
 \mathcal H = & 
 H(\bar y,p)-\mu H(\bar x, \mu^{-1} p), \quad p:=2(\bar x-\bar y)/\epsilon^2.
\end{align*}
From~\eqref{Hsuper}, we get 
\begin{align*}
\mathcal H \leq -(1 - \mu) c_H^{-1} |p|^m + c_H (1+|p|^{m}) \epsilon  + A(1 - \mu)
%{\br \leq C\epsilon^m (1-\mu)^{-m+1}  + A(1 - \mu),}
\end{align*}
and from here, taking $\epsilon \leq (1 - \mu)c_H^{-2}$, we conclude that
\begin{align}\label{cotaH}
\mathcal H \leq C \epsilon + A(1 - \mu),
\end{align}
where $C > 0$ just depends on $c_H$.

\medskip

Consider $\delta = |\bar s - \bar t|/2 > 0$ and we split the term $\mathcal D$ as
\begin{align*}
\mathcal D = \mathcal D_1 + \mathcal D_2 
\end{align*}
with
\begin{align*}
\mathcal D_1 = & \partial_t^\alpha[\bar s - \delta]\bar u(\bar x, \cdot)(\bar s) - \partial_t^\alpha[\bar t - \delta]u(\bar y, \cdot)(\bar t) \\
\mathcal D_2 = & \partial_t^\alpha[\bar s - \delta, \bar s]\phi_1(\bar x, \cdot)(\bar s) + \partial_t^\alpha[\bar t - \delta, \bar t]\phi_2(\bar y, \cdot)(\bar t).
\end{align*}
We start with $\D_2$. Directly from the definitions, we see that
\begin{align*}
\D_2 = & L\int_{\bar s - \delta}^{\bar s} \frac{|\bar s - \bar t|^\alpha - |z - \bar t|^\alpha}{|\bar s - z|^{1 + \alpha}}dz + L\int_{\bar t - \delta}^{\bar t} \frac{|\bar s  - \bar t|^\alpha - |\bar s - z|^\alpha}{|\bar t - z|^{1 + \alpha}}dz + \int_{\bar s - \delta}^{\bar s} \frac{\psi_\beta(\bar s) - \psi_\beta(z)}{|\bar s - z|^{1 + \alpha}}dz \\
=: & \D_2' + o_\beta(1),
\end{align*}
where $o_\beta(1) \to 0$ as $\beta \to 0$ independent of the rest of the variables by the smoothness of $\psi$.
Now, performing the change of variables $z = \bar s + y$ in the first integral, and $z = \bar t + y$ in the second, we arrive at
\begin{align*}
\D_2' &=  L\int_{- \delta}^{0} \frac{|\bar s - \bar t|^\alpha - |\bar s - \bar t + y|^\alpha}{|y|^{1 + \alpha}}dy + L\int_{- \delta}^{0} \frac{|\bar s  - \bar t|^\alpha - |\bar s - \bar t - y|^\alpha}{|y|^{1 + \alpha}}dy \\
%&= & L\int_{- \delta}^{0} \frac{|\bar s - \bar t|^\alpha - |\bar s - \bar t + y|^\alpha}{|y|^{1 + \alpha}}dy - L\int_{\delta}^{0} \frac{|\bar s  - \bar t|^\alpha - |\bar s - \bar t + y|^\alpha}{|y|^{1 + \alpha}}dy \\
&=  L\int_{- \delta}^{0} \frac{|\bar s - \bar t|^\alpha - |\bar s - \bar t + y|^\alpha}{|y|^{1 + \alpha}}dy + L\int_{0}^{\delta} \frac{|\bar s  - \bar t|^\alpha - |\bar s - \bar t + y|^\alpha}{|y|^{1 + \alpha}}dy \\
&=  -L\int_{- \delta}^{\delta} \frac{|\bar s - \bar t + y|^\alpha - |\bar s - \bar t|^\alpha}{|y|^{1 + \alpha}}dy\\
&= -L \int_{- \delta}^{\delta} \frac{|\bar s - \bar t + y|^\alpha - |\bar s - \bar t|^\alpha
  - \alpha |\bar s - \bar t|^{\alpha - 1}\widehat{\bar s - \bar t}\,y}{|y|^{1 + \alpha}}dy,
\end{align*}
where the last equality comes from the symmetry of the kernel.
Performing a second order expansion and recalling that $\delta = |\bar s - \bar t|/2$, we obtain that there
exists $\rho(y)\in (-\delta, \delta)$ such that
\begin{align*}
  - \left(|\bar s - \bar t + y|^\alpha - |\bar s - \bar t|^\alpha
    - \alpha |\bar s - \bar t|^{\alpha - 1}\widehat{\bar s - \bar t}\,y\right) 
 & = -\frac{\alpha (\alpha -1)}{2} |\bar s - \bar t + \rho(y)|^{\alpha - 2}y^2\\
 & \geq- \frac{\alpha (\alpha -1)}{2^{\alpha-1}} |\bar s - \bar t|^{\alpha - 2}y^2.
\end{align*}
Therefore,
$$
\D_2' \geq   -\frac{L \alpha (\alpha - 1)}{2^{\alpha-1}}  |\bar s - \bar t|^{\alpha - 2} \int_{- \delta}^{\delta}  |y|^{1-\alpha} dy
= \frac{\alpha (1- \alpha )}{2-\alpha}L=:  cL,
$$
and from here
\begin{align}\label{D2}
\D_2 \geq cL - o_\beta(1).
\end{align}
Now we deal with $\D_1$, writing
\begin{align*}
\D_1 = & \partial_t^\alpha [\bar s-1]\bar u(\bar x, \cdot)(\bar s) - \partial_t^\alpha[\bar t -1]u(\bar y, \cdot)(\bar t) \\
& + \partial_t^\alpha [\bar s -1, \bar s - \delta]\bar u(\bar x, \cdot)(\bar s) - \partial_t^\alpha[\bar t -1, \bar t - \delta]u(\bar y, \cdot)(\bar t) \\
\geq & -C ||u||_\infty +  \partial_t^\alpha [\bar s-1, \bar s - \delta]\bar u(\bar x, \cdot)(\bar s) - \partial_t^\alpha[\bar t -1, \bar t - \delta]u(\bar y, \cdot)(\bar t) \\
%\geq & -C |u(\bar x, 0) - u(\bar x, \bar s)| \bar s^{-\alpha}  -C |u(\bar y, 0) - u(\bar y, \bar t)| \bar t^{-\alpha} \\
%& \ + \partial^\alpha [0, \bar s - \delta]u(\bar x, \cdot)(\bar s) - \partial^\alpha[0, \bar t - \delta]u(\bar y, \cdot)(\bar t) \\
%\geq & -2 C C_0 + \partial^\alpha [0, \bar s - \delta]u(\bar x, \cdot)(\bar s) - \partial^\alpha[0, \bar t - \delta]u(\bar y, \cdot)(\bar t) \\
=: & - C ||u||_\infty + \D_1',
\end{align*}
where $C > 0$ depends only on $\alpha$.

To deal with the remaining term $\D_1'$, we assume that $\bar s < \bar t$ (the case $\bar t < \bar s$ follows the same lines). Performing similar change of variables as above and using the maximal inequality $\Phi(\bar x, \bar y, \bar s, \bar t) \geq \Phi(\bar x, \bar y, \bar s + y, \bar t + y)$ we arrive at
\begin{align*}
\D_1' = & \int_{-1}^{-\delta} \frac{\bar u(\bar x, \bar s) - u(\bar y, \bar t) - (\bar u(\bar x, \bar s + y) - u(\bar y, \bar t + y))}{|y|^{1 + \alpha}}dy
\geq \int_{-1}^{-\delta} \frac{\psi_\beta(\bar s + y) -\psi_\beta(\bar s)}{|y|^{1 + \alpha}}dy.
%& \ - \int_{-\bar t}^{-\bar s} \frac{u(\bar y, \bar t) - u(\bar y, \bar t + y)}{|y|^{1 + \alpha}}dy.
\end{align*}
Noticing that the smooth function $\psi_\beta$
satisfies $|\psi_\beta '|\leq C\beta$, we conclude that $\D_1' \geq -o_\beta(1)$.
From this
%meanwhile, using the maximal inequality $\Phi(\bar x, \bar y, \bar s, \bar t) \geq \Phi(\bar x, \bar y, \bar s, \bar t + y)$ for $y \geq -\bar t$ we arrive at
%\begin{align*}
%D_0 \geq -L \int_{-\bar t}^{-\bar s} \frac{|\bar s - \bar t|^\gamma - |\bar s- \bar t - y|^\gamma}{|y|^{1 + \alpha}}dy \geq 0.
%\end{align*}
%
%The above estimates lead us to
\begin{align*}
\D_1 \geq -C ||u||_\infty - o_\beta(1).
\end{align*}
Joining this with~\eqref{cotaH} and~\eqref{D2} in~\eqref{testeo} we get
\begin{align*}
cL \leq C||u||_\infty + o_\beta(1) + C \epsilon + A(1 - \mu).
\end{align*}
Then, we let $\epsilon \to 0$ first, then $\mu \to 1$ and finally $\beta \to 0$ and,
having taken $L$ large enough just in terms of the data and $||u||_\infty$,
we reach a contradiction. It ends the proof.
\end{proof}
%%%%%%%%%%%%%%%%%%%%%%%%%%%%%%

\medskip
Now we would like to obtain estimates in space.
\begin{teo}\label{holderx}
Assume hypotheses of Theorem~\ref{teoHoldert} hold.
For each bounded viscosity solution to~\eqref{eq}, there exists a modulus
of continuity $m\in C([0,+\infty))$ independent of $t$ such that
\begin{equation*}
|u(x,t) - u(y,t)| \leq m(|x - y|) \quad \mbox{for all} \ x, y \in \T^N, \ t \geq 0.
\end{equation*}

If, in addition, $H$ satisfies~\eqref{coerc-Hsuper} for some $m \geq 1$, then,
for each $\beta \in (0,1)$, there exists a constant $C > 0$ such that 
\begin{equation*}
  |u(x,t) - u(y,t)| \leq C|x - y|^\beta \quad \mbox{for all} \ x, y \in \T^N, \ t \geq 0.
\end{equation*}

The modulus $m$ and the constant $C$ depend on the datas and $||u||_\infty$ but do not depend on $t$.
\end{teo}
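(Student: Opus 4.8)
\emph{Strategy.} The plan is to run the nonlocal counterpart of the classical mechanism recalled in the introduction — if $\partial_t^\alpha u\in L^\infty$, then the coercivity of $H$ in~\eqref{H} or~\eqref{Hsuper} bounds $Du$ — which cannot be applied directly because the $\alpha$-Hölder-in-time bound of Theorem~\ref{teoHoldert} does not suffice to put $\partial_t^\alpha u$ in $L^\infty$. I would first regularize $u$ in time by a sup-convolution. For $\eta>0$ let $\tilde u$ be $u$ extended by $g$ for negative times as in~\eqref{Caputo2-not}, and set
\[
u^\eta(x,t)=\sup_{s\in\R}\Big\{\tilde u(x,s)-\tfrac{1}{\eta}|t-s|^2\Big\}.
\]
By~\eqref{holder-en-t} the supremum is attained at some $s^*$ with $|t-s^*|\le\sigma(\eta):=(\eta L)^{1/(2-\alpha)}$, so $u\le u^\eta$, $\|u^\eta-u\|_\infty\le\kappa(\eta):=L\sigma(\eta)^\alpha\to 0$, and $u^\eta(\cdot,0)\le g+C\sigma(\eta)^\alpha$. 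Shifting the competitor in $s$ shows $u^\eta(x,\cdot)$ is still $\alpha$-Hölder in time with the same constant $L$, while for fixed $\eta$ it is Lipschitz in time with some finite constant $\Lambda_\eta$ (of order $\eta^{-(1-\alpha)/(2-\alpha)}$). Finally, since we convolve only in time and $H=H(x,p)$ does not depend on $t$, a careful argument — the Caputo operator is covariant under time-shifts of the $g$-extended function, and for $t>\sigma(\eta)$ the maximizing shift stays inside the domain — shows that $u^\eta$ is again a viscosity subsolution of~\eqref{eq} in $\T^N\times(\sigma(\eta),+\infty)$.

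\emph{A logarithmic bound for $\partial_t^\alpha u^\eta$.} This is the heart of the argument. Although $\partial_t^\alpha$ is unbounded on $\alpha$-Hölder functions, combining the two time-moduli of $u^\eta$, namely $|u^\eta(x,t)-u^\eta(x,t-h)|\le\min(Lh^\alpha,\Lambda_\eta h)$ for $h>0$, gives
\[
\sup_{x,\,t>\sigma(\eta)}\big|\partial_t^\alpha u^\eta(x,\cdot)(t)\big|\le\int_0^{\infty}\frac{\min\big(Lh^\alpha,\Lambda_\eta h,2\|u^\eta\|_\infty\big)}{h^{1+\alpha}}\,dh=:C_\eta\le C\big(1+L\log(1/\eta)\big),
\]
with $C$ depending only on $\alpha$, $L$ and $\|u\|_\infty$: the crossover of $Lh^\alpha$ and $\Lambda_\eta h$ near $h=0$ makes the small-$h$ part bounded \emph{independently} of $\eta$, so only the intermediate range contributes a logarithm.

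\emph{Transfer to the space variable.} I now bound $u^\eta(\cdot,t)$ in space, uniformly in $t$, mimicking the classical "$\partial_t$ bounded $+$ $H$ coercive $\Rightarrow$ $Du$ bounded" step. Pick $\Lambda_\eta'\ge\mathrm{Lip}(g)$ with $\inf_xH(x,p)>C_\eta$ for $|p|\ge\Lambda_\eta'$ (possible by the coercivity in~\eqref{H}/\eqref{Hsuper}; under~\eqref{coerc-Hsuper} one may take $\Lambda_\eta'\le(C(1+C_\eta))^{1/m}\le C(\log(1/\eta))^{1/m}$). Then $u^\eta(\cdot,t)$ is $\Lambda_\eta'$-Lipschitz for all $t\ge0$: for $t\le\sigma(\eta)$ this follows from~\eqref{subsupsol} (which gives $|u(x,t)-g(x)|\le C\sigma(\eta)^\alpha$ and a similar bound for $u^\eta$), while for larger $t$ it follows from a doubling argument on $u^\eta$ — maximize $u^\eta(x,s)-u^\eta(y,t)-\Lambda_\eta'|x-y|-\epsilon^{-2}|s-t|^2-\psi_\gamma(s)$, with $\psi_\gamma$ a time localization as in the proof of Theorem~\ref{teoHoldert}; since $u^\eta$ is Lipschitz in time the penalization forces $|\bar s-\bar t|\le C\epsilon^2$, so at the maximum $\bar x\ne\bar y$ and $\bar s>\sigma(\eta)$, and the subsolution inequality for $u^\eta$ at $(\bar x,\bar s)$, whose nonlocal-in-time part is bounded by $C_\eta$, yields $H(\bar x,\Lambda_\eta'\widehat{\bar x-\bar y})\le C_\eta+o(1)$, contradicting the choice of $\Lambda_\eta'$. (Alternatively, compare $u^\eta$ with the cone supersolution $u^\eta(x_0,\cdot)+\Lambda_\eta'|\cdot-x_0|+C\sigma(\eta)^\alpha$.) Combining with $\|u^\eta-u\|_\infty\le\kappa(\eta)$ gives, for all $x,y\in\T^N$ and $t\ge0$,
\[
|u(x,t)-u(y,t)|\le\Lambda_\eta'|x-y|+C\eta^{\alpha/(2-\alpha)}.
\]
Taking the infimum over $\eta>0$ yields a $t$-independent modulus of continuity, which is the first assertion. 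Under~\eqref{coerc-Hsuper}, taking $\eta=|x-y|^\lambda$ with $\lambda\ge\beta(2-\alpha)/\alpha$ makes the last term $\le C|x-y|^\beta$, while $\Lambda_\eta'|x-y|\le C(\log(1/|x-y|))^{1/m}|x-y|\le C_\beta|x-y|^\beta$ for $|x-y|$ small; this gives the Hölder bound for every $\beta\in(0,1)$.

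\emph{Expected obstacle.} The two delicate points are exactly those flagged above: (i) proving rigorously that the time sup-convolution $u^\eta$ is a subsolution of~\eqref{eq} — i.e.\ that ``extending by $g$ past $0$'' is compatible with sup-convolution, so that the nonlocal-in-time operator transforms covariantly under the shift $t\mapsto s^*$ and the maximizing shift stays in the domain; and (ii) the crossover bookkeeping in the estimate of $\partial_t^\alpha u^\eta$, which is elementary but must be done with care since it is exactly the place where the gain over the merely $\alpha$-Hölder bound occurs. Everything else is a standard doubling (or comparison) argument followed by the optimization in $\eta$.
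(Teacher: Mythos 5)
Your proposal is correct and follows essentially the same route as the paper: a sup-convolution in time that stays a subsolution (the paper simply cites~\cite{ty17} for the point you flag as delicate), the three-regime (Lipschitz/H\"older/bounded) estimate of the Caputo term yielding a logarithmic bound in the regularization parameter, a cone/doubling argument in space using the coercivity of $H$, and a final optimization over the parameter to get the $t$-independent modulus and the $\beta$-H\"older bound. The only differences are cosmetic (the paper fixes a base point $(x_0,s_0)$ and keeps the $\delta$-splitting explicit rather than writing the $\min$ under one integral, and it concludes via the bound $C(1+|\log|x-y||)\,|x-y|$), so no changes are needed.
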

%%%%%%%%%%%

\begin{proof}
For $\epsilon\in (0,1)$, we introduce
the sup-convolution
$$
u^\epsilon(x,t) = \sup_{s \geq 0} \{ u(x,s) - \epsilon^{-1}|s - t|^2 \}.
$$
We collect some properties of this regularization of $u$:
\begin{itemize}
\item[(i)] $u^\epsilon$ is still H\"older continuous in time satisfying~\eqref{holder-en-t} like $u$,
\item[(ii)] $||u^\epsilon-u||_\infty\leq  C\epsilon^{\alpha/2}$,
\item[(iii)] $u^\epsilon$ is Lipschitz continuous in time with Lipschitz constant  $C\epsilon^{-1}$ with $C>0$ just depending on $||u||_\infty$,
\item[(iv)] $u^\epsilon$ is a viscosity subsolution to~\eqref{eq} in $\T^N \times (a_\epsilon, +\infty)$ for some $a_\epsilon \to 0$ as $\epsilon \to 0$.
\end{itemize}
%For the proof of (i), if $\bar t$ denotes the point where the maximum is achieved in $u^\epsilon(x,t)$, then
%we have, for all $s\geq 0$,
%$$
%u^\epsilon(x,t)-u^\epsilon(x,t')\leq u(x,\bar t)-\frac{|t-\bar t|^2}{\epsilon}-u(x,s)+\frac{|t'-s|^2}{\epsilon}.
%$$
%Writing the above inequality for $s=\bar t +t'-t$ and using~\eqref{holder-en-t}, we obtain (i).
%To prove (ii), we use again~\eqref{holder-en-t},
%$$
%u^\epsilon(x,t)-u(x,t)=\sup_{s \geq 0} \{ u(x,s) - u(x,t)-\frac{|s-t|^2}{\epsilon} \}
%\leq \sup_{s \geq 0} \{C|t-s|^\alpha -\frac{|s-t|^2}{\epsilon}\}\leq C\epsilon^{\frac{\alpha}{2-\alpha}}\leq  C\epsilon^{\alpha/2}.
%$$
%The proof of (iii) is a classical property of the sup-convolution regularization and (iv) is proved in~\cite{ty17}.}
The proofs of (i) and (ii) are easy consequences of Theorem~\ref{teoHoldert},~(iii)
is a classical property of the sup-convolution regularization and (iv) is proved in~\cite{ty17}.

%%%%%%%%%%%
Now we prove the desired regularity of $u$ by adapting the standard viscosity procedure to get regularity estimates from the coercivity of $H$.
For any $x_0\in\T^N$, $s_0>0$, $\beta>0$ and $L>0$ to be chosen, we consider
\begin{equation}\label{max620}
\sup_{x \in \T^N, s \geq 0} \{ u^\epsilon(x, s) - u^\epsilon(x_0, s_0) - L|x - x_0|-\frac{|s-s_0|^2}{\beta^2} \},
\end{equation}
where $\epsilon$ is chosen small enough in order that $s_0>a_\epsilon$ in (iv). Classical results imply
that this maximum is achieved at $(\bar x,\bar s)$ with $\bar s\to s_0$ as $\beta \to 0$.
We take $\beta$ small enough in order that $\bar s> a_\epsilon$.

If $\bar x\not= x_0$, then we use $(x,s)\mapsto u^\epsilon(x_0, s_0) + L|x - x_0|+\frac{|s-s_0|^2}{\beta^2}$
as a test function for the subsolution $u^\epsilon$ of~\eqref{eq} at $(\bar x,\bar s)$ to get that,
for every $\bar \delta\in (0,1)$,
\begin{equation}\label{ineg-visco845}
\partial_t^\alpha[\bar s-\bar \delta] u^\epsilon(\bar x, \cdot)(\bar s)
+ \partial_t^\alpha[\bar s-\bar \delta, \bar s] \frac{|\cdot-s_0|^2}{\beta^2}(\bar s)
+H(\bar x, L \widehat{\bar x - x_0}) \leq 0.
\end{equation}

Actually, since  $\frac{|\cdot-s_0|^2}{\beta^2}$ is smooth and $u^\epsilon$
is Lipschitz continuous, we can send $\bar\delta\to 0$ in the previous inequality.
In other words, due to the Lipschitz continuity of $u^\epsilon$, 
we can use $u^\epsilon$ itself as a test-function in the fractional derivative in the viscosity inequality,
see~\cite[Proposition 2.4]{ty17} for details.       

It follows that it is enough to estimate
the fractional term $\partial_t^\alpha u^\epsilon(\bar x, \cdot)(\bar s)$
that we expand, for $\delta>0$, as
$$
\partial_t^\alpha u^\epsilon(\bar x, \cdot)(\bar s)
= \partial_t^\alpha[\bar s-1] u^\epsilon(\bar x, \cdot)(\bar s)
+\partial_t^\alpha[\bar s-1, \bar s-\delta] u^\epsilon(\bar x, \cdot)(\bar s)
+ \partial_t^\alpha[\bar s-\delta, \bar s] u^\epsilon(\bar x, \cdot)(\bar s).
$$

At first, from (ii),
$$
\partial_t^\alpha[\bar s-1] u^\epsilon(\bar x, \cdot)(\bar s)\geq -C||u^\epsilon||_\infty\geq -C(||u||_\infty+\epsilon^{\alpha/2}).
$$
Then, using (i), 
\begin{eqnarray*}
\partial_t^\alpha[\bar s-1, \bar s-\delta] u^\epsilon(\bar x, \cdot)(\bar s)
=\int_{\bar s-1}^{\bar s-\delta}  \frac{u^\epsilon(\bar x, \bar s)- u^\epsilon(\bar x, s)}{|\bar s-s|^{1+\alpha}}ds
\geq -C\int_{\bar s-1}^{\bar s-\delta} \frac{1}{|\bar s-s|}ds\geq -C|\log(\delta)|.
\end{eqnarray*}
For the third term, we use (iii) to obtain
\begin{eqnarray*}
\partial_t^\alpha[\bar s-\delta, \bar s] u^\epsilon(\bar x, \cdot)(\bar s)
\geq -C\int_{\bar s-\delta}^{\bar s} \frac{1}{|\bar s-s|^\alpha}ds \geq -\frac{C}{\epsilon}\delta^{1-\alpha}.
\end{eqnarray*}
%Finally
%\begin{eqnarray*}
%\partial_t^\alpha[\bar s-\bar \delta, \bar s] \frac{|\cdot-s_0|^2}{\beta^2}(\bar s) 
%&=&  \frac{1}{\beta^2}\int_{\bar s-\bar\delta}^{\bar s}\frac{|\bar s-s_0|^2-|s-s_0|^2}{|\bar s -s|^{1+\alpha}}ds\\
%&\geq& -\frac{1}{\beta^2}\int_{\bar s-\bar\delta}^{\bar s}\frac{|\bar s -s|^2 +2|\bar s -s| |\bar s -s_0|}{|\bar s -s|^{1+\alpha}}ds\\
%&\geq& -\frac{1}{\beta^2}\int_{\bar s-\bar\delta}^{\bar s}|\bar s -s|^{1-\alpha}ds
%-\frac{|\bar s -s_0|}{\beta^2}\int_{\bar s-\bar\delta}^{\bar s}\frac{1}{|\bar s -s|^{\alpha}}ds\\
%&\geq& -C(\beta^{-2}\bar\delta^{2-\alpha}+\epsilon^{-1}\bar\delta^{1-\alpha}),
%\end{eqnarray*}
%where we used that ${|\bar s -s_0|}/{\beta^2}$ is bounded by $C\epsilon^{-1}$ as a consequence of (iii).
%Notice that it is possible to send $\bar{\delta}$ to 0 in the above inequality, which means actually that
%we can use $u^\epsilon$ itself as a test-function in the fractional term in the viscosity inequality~\eqref{ineg-visco845} because $u^\epsilon$
%is Lipschitz continuous (see~\cite[Proposition 2.4]{ty17}).

Plugging these estimates in~\eqref{ineg-visco845}, we obtain
\begin{equation*}
  H(\bar x, L \widehat{\bar x - x_0}) \leq  C(1 + |\log(\delta)| + \epsilon^{-1} \delta^{1 - \alpha},
\end{equation*}
where $C > 0$ just depends on the data and $||u||_\infty$. 
Taking the minimum for $\delta >0$ we arrive at
\begin{equation}\label{inegH780}
H(\bar x, L \widehat{\bar x - x_0}) \leq  C(1 + |\log(\epsilon)|).
\end{equation}
From the coercivity of $H$, we reach a contradiction if $L=L(\epsilon)$ is large enough.

It follows that the maximum in~\eqref{max620} is achieved for $\bar x=x_0$, which implies
$$
u^\epsilon (x,\bar s)-u^\epsilon (x_0,\bar s)\leq L(\epsilon)|x-x_0|, \qquad\text{for all $x\in\T^N$.}
$$
Sending  $\beta\to 0$, recalling $\bar s\to s_0$ as $\beta\to 0$
and that $x_0, s_0$ are arbitrary, we finally obtain that,
for all $x,y\in\T^N$, $t>0$,
\begin{eqnarray}\label{abc3765}
&& u(x,t) - u(y,t) \leq u^\epsilon(x,t) - u^\epsilon(y, t)+C\epsilon^{\alpha/2}\leq L(\epsilon) |x-y| + C\epsilon^{\alpha/2},
\quad 0<\epsilon <1.
\end{eqnarray}
This latter inequality means that $u$ is uniformly continuous with respect to $x$ independently of $t$.

In addition, if $H$ satisfies~\eqref{coerc-Hsuper} for some $m \geq 1$, then~\eqref{inegH780}
and~\eqref{abc3765} lead to
\begin{eqnarray*}
&& u(x,t) - u(y,t) \leq C(1+|\log(\epsilon)|) |x-y| + C\epsilon^{\alpha/2},
\quad 0<\epsilon <1.
\end{eqnarray*}
Thus, taking the infimum with respect to $0< \epsilon <1$, we conclude that
\begin{equation*}
|u(x,t) - u(y, t)| \leq C (1+ |\log |x - y| |)  |x - y|,
\end{equation*}
from which the result follows.
\end{proof}

\begin{remark}
When $H$ has a sublinear growth,
it is possible to obtain some better regularity estimates in space, namely,
Lipschitz estimates. More precisely, if~\eqref{H} holds and $g \in \mathrm{Lip}(\T^N)$,
then every bounded viscosity solution to~\eqref{eq} satisfies
\begin{equation*} %\label{sollip111}
|u(x,t) - u(y,t)| \leq (1+{\rm Lip}(g))\, E_\alpha\left( 2 c_H t^\alpha\right) |x-y| \quad \mbox{for all} \ x,y \in \T^N, t \geq 0.
\end{equation*}
Such a result was already showed in Giga and Namba~\cite{gn17}. We do not focus on such results because
the Lipschitz constant depends heavily on time, a dependence we want to avoid in order to obtain
the large time behavior of the solution.
\end{remark}

%%%%%%%%%%%%%%%%%%%%%%%%%%%%%%%%%%%%%%%%%%%%%%%%%%%%%%%%%%%%%%%%%%
%%%%%%%%%%%%%%%%%%%%%%%%%%%%%%%%%%%%%%%%%%%%%%%%%%%%%%%%%%%%%%%%%%
%%%%%%%%%%%%%%%%%%%%%%%%%%%%%%%%%%%%%%%%%%%%%%%%%%%%%%%%%%%%%%%%%%
%%%%%%%%%%%%%%%%%%%%%%%%%%%%%%%%%%%%%%%%%%%%%%%%%%%%%%%%%%%%%%%%%%

\section{Oscillating function with positive Caputo derivative.}
\label{seccounterexample}

In this section we construct a bounded function $u: [0,+\infty) \to \R$ such that $\partial_t^{\alpha}u \geq 0$ but such that
$$
\liminf_{t \to +\infty} u(t) < \limsup_{t \to +\infty} u(t),
$$
which prevents $u$ to have any limit as $t\to +\infty$. This result shows a great contrast with the standard case $\alpha = 1$ in which $\partial_t u\geq0$ implies that $u$ is a nondecreasing function and therefore it is convergent.

%It is a well-known fact that if $u: [0,+\infty) \to \R$ is a continuously differentiable function with positive first derivative then it is decreasing. Moreover, if it is also bounded, then $\lim_{t\to+\infty} u(t)$ exists and it is a finite real number.
%Of course, the last assertion requires a suitable integrability condition over $u'$. A function $u$ such that $u' \sim t^{-1}$ for all $t$ large is unbounded, meanwhile if $\epsilon > 0$ and $u' \sim -t^{1 + \epsilon}$ for all $t$ large, then $u$ is bounded. Clearly, Fundamental Theorem of Calculus leads directly to these simple facts. In the context of Caputo derivative, in~\cite{GM} it is possible to see that such a theorem has the more involved formulation
In what follows, for any $\alpha\in(0,1)$, we define the  incomplete regularized beta function (see~\cite[Chapter 6]{as64}) by
\begin{eqnarray*}
B_{\alpha}[z_0,z_1]:=\frac{1}{\pi\csc(\alpha\pi)}\int_{z_0}^{z_1}t^{-\alpha}(1-t)^{\alpha-1}dt, \quad
\text{for all $0\leq z_0\leq z_1\leq 1$,}
\end{eqnarray*}
and we simply write $B_\alpha[z] = B_\alpha[0,z]$. We remark that $B_{\alpha}[0,1]=1$ (\cite[6.1.17 and 6.2.2]{as64}).

We also define
\begin{eqnarray*}
	b_{\alpha}:=B^{-1}_{\alpha}[1/2]\in(0,1),
\end{eqnarray*}
where $B_\alpha^{-1}[\cdot]$ is the inverse function of $B_\alpha[\cdot]$. As an example, if $\alpha=1/2$, then $b_{\alpha}=1/2$. %Notice that $b_\alpha \geq 1/2$ if $\alpha \leq 1/2$.

Hence, in the general case, with this choice of $b_{\alpha}$, 
we have 
\begin{eqnarray}\label{b}
B_{\alpha}[0,b_{\alpha}]=B_{\alpha}[b_{\alpha},1]=1/2.
\end{eqnarray}
Then, we define
$$
\eta_{\alpha}:=\pi\csc(\alpha\pi)B_{\alpha}[b_{\alpha}^{3},b_{\alpha}^{2}] \in (0,1),
$$
and consider the continuous functions
\begin{equation*}
f_1(t) = \left \{ \begin{array}{cl} 1 \quad & \mbox{if} \ t \in [0,1], \\ t^{-\alpha} \quad & \mbox{if} \ t \geq 1, \end{array} \right .
\end{equation*}
and
\begin{equation*}
f_2(t) = \left \{ \begin{array}{cl}\frac{t-a_{2k}}{\epsilon_k} \quad & \mbox{if} \ t \in [a_{2k}, a_{2k}+\epsilon_k), \\ 1 \quad & \mbox{if} \ t \in [a_{2k}+\epsilon_k, a_{2k + 1}-\epsilon_k), \\ \frac{a_{2k+1}-t}{\epsilon_k} \quad & \mbox{if} \ t\in [a_{2k+1}-\epsilon_k, a_{2k + 1}),\\ 0 \quad & \mbox{if not,}\end{array} \right .
\end{equation*}
%$$
%_2(t) = \sum_{k=0}^{\infty} \mathbf{1}_{[a_{2k}, a_{2k + 1})}(t),
%$$
where 
\begin{equation*}
% a_{2k}:=(1/b_{\alpha})^{2k}
a_{k}:=(1/b_{\alpha})^{k}
\quad \mbox{and} \quad \epsilon_k:=\frac{1-b_{\alpha}^{2}}{4}\frac{\eta_{\alpha}}{a_{2k}}, \quad \mbox{for all} \ k \geq 0.
\end{equation*} 
Next, we consider the continuous function $f := f_1 f_2$ (see Figure~\ref{dess-f})
and we define $u$ as
\begin{equation}\label{defu}
u(t) = \int_{0}^{t} \frac{f(z)}{(t-z)^{1-\alpha}}dz, \quad \text{for all $t \geq 0$.}
\end{equation}
%%%%%%
\begin{figure}[ht]
\begin{center}
\includegraphics[width=10cm]{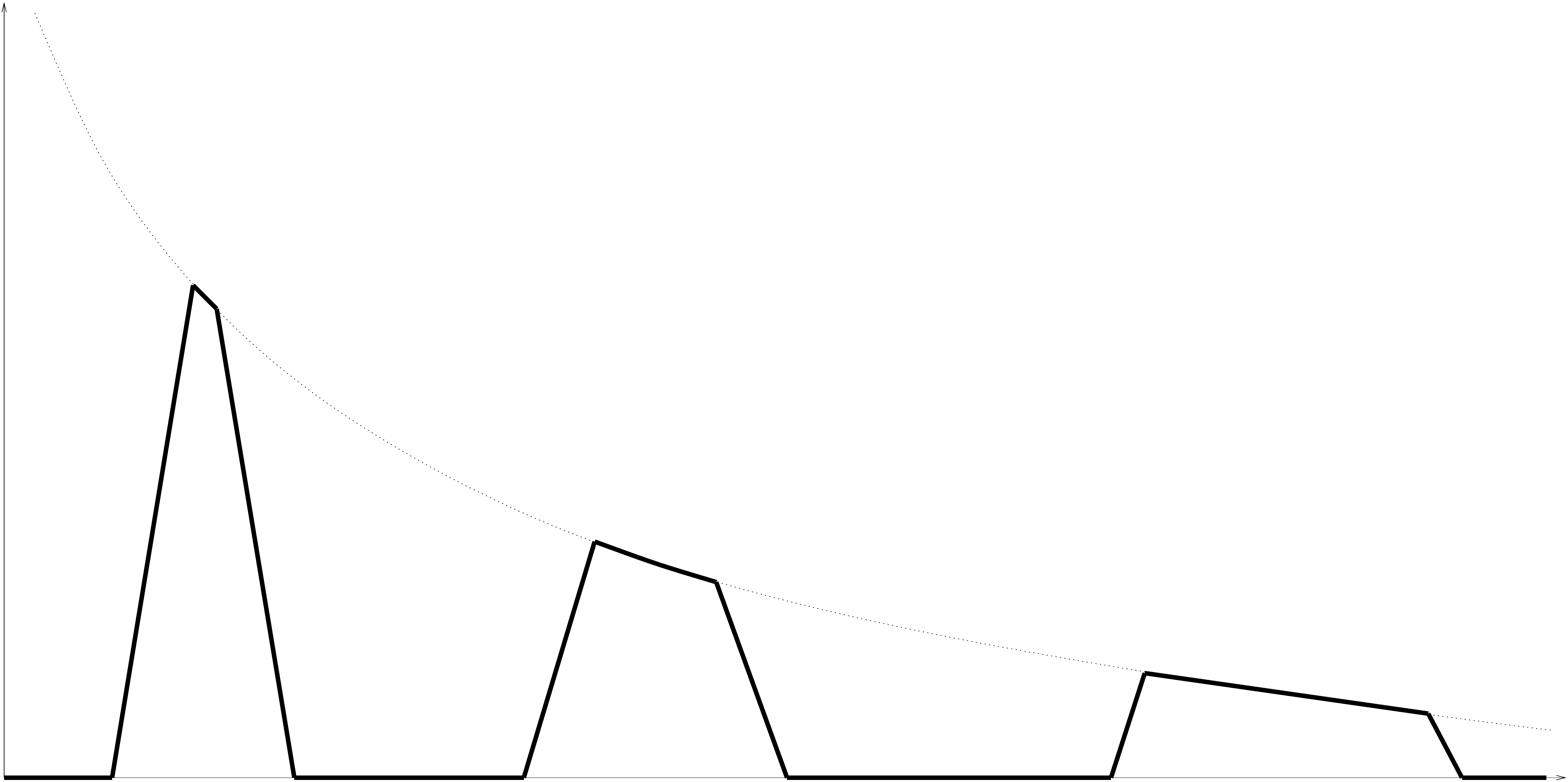}
\end{center}
\caption{Behavior of $f=f_1f_2$}
\label{dess-f}
\unitlength=1pt
\begin{picture}(0,0)
\put(146,38){$t$}
\put(-124,160){$\frac{1}{t^\alpha}$}
\put(-90,74){$f$}
\put(-125,33){$a_0$}
\put(-92,33){$a_1$}
\put(-50,33){$a_2$}
\put(-3,33){$a_3$}
\put(55,33){$a_4$}
\put(120,33){$a_5$}
\end{picture}
\end{figure}
%%%%%%%
%%%% DESSIN

The function $f$ is regular enough (locally Lipschitz) to use the representation formula in~\cite[Theorem 3.7]{diethelm10}, meaning that $u$ solves the fractional ODE
\begin{equation}\label{ode1}
\partial_t^{\alpha} u = f \quad \mbox{in} \ (0, +\infty), \qquad u(0) = 0,
\end{equation}
where $\partial_t^{\alpha}$ is the Caputo derivative of order $\alpha\in(0,1)$.

Notice that  $u = 0$ in $[0,1]$ and $0 \leq u$ is bounded in $\R^+$. In fact,
for $t > 1$ we see that
\begin{equation*}
u(t) \leq \int_{1}^{t} \frac{f_1(z)}{(t-z)^{1-\alpha}}dz,
\end{equation*}
from which we get that
\begin{equation*}
u(t) \leq t^{\alpha-1} \int_{1}^{t} \frac{z^{-\alpha}}{(1 - z/t)^{1-\alpha}} dz = t^{\alpha-1} \int_{1/t}^{1} \frac{t^{-\alpha}y^{-\alpha}}{(1 - y)^{1-\alpha}} tdy \leq \int_{0}^{1} \frac{dy}{y^{\alpha}(1 - y)^{1-\alpha}} =\pi\csc(\alpha\pi),
\end{equation*}
from which $u$ is bounded. 

Now we compare $u(a_{2N + 1})$ and $u(a_{2N + 2})$, for $N$ large enough such that
\begin{equation}\label{condN}
\int^{1}_{1-\epsilon_N/a_{2N+1}}\frac{dy}{y^{\alpha}(1 - y)^{1-\alpha}}<\eta_\alpha/4
\quad \text{and} \quad
a_{2N}(a_1-1)=(1/b_\alpha)^{2N}( 1/b_\alpha-1)\geq 2.
\end{equation}
Using the definition of $f$ we see that
\begin{align*}
u(a_{2N + 1})%=& \int_{0}^{a_{2N + 1}} \frac{f(z)}{(a_{2N + 1} - z)^{1-\alpha}}dz \\
= &\int_{1}^{a_{2N + 1}} \frac{z^{-\alpha}f_2(z)}{(a_{2N + 1} - z)^{1-\alpha}}dz \\
= & \sum_{k=0}^{N}\Bigg( \int_{a_{2k}}^{a_{2k}+\epsilon_k} \frac{z^{-\alpha}f_2(z)}{(a_{2N + 1} - z)^{1-\alpha}}dz+\int_{a_{2k}+\epsilon_k}^{a_{2k+1}-\epsilon_k} \frac{z^{-\alpha}}{(a_{2N + 1} - z)^{1-\alpha}}dz\\
&\hspace{7cm}+\int_{a_{2k+1}-\epsilon_k}^{a_{2k+1}} \frac{z^{-\alpha}f_2(z)}{(a_{2N + 1} - z)^{1-\alpha}}dz\Bigg) \\
= &\sum_{k=0}^{N}\int_{a_{2k}}^{a_{2k+1}} \frac{z^{-\alpha}}{(a_{2N + 1} - z)^{1-\alpha}}dz
- \sum_{k=0}^{N}\int_{a_{2k}}^{a_{2k}+\epsilon_k} \frac{z^{-\alpha}(1-f_2(z))}{(a_{2N + 1} - z)^{1-\alpha}}dz
\\
&\hspace{7cm}-\sum_{k=0}^{N}\int_{a_{2k+1}-\epsilon_k}^{a_{2k+1}} \frac{z^{-\alpha}(1-f_2(z))}{(a_{2N + 1} - z)^{1-\alpha}}dz \\
=: &v_1(a_{2N+1})-v_2(a_{2N+1})-v_3(a_{2N+1}),
\end{align*}
and similarly
$$u(a_{2N+2})=:v_1(a_{2N+2})-v_2(a_{2N+2})-v_3(a_{2N+2}).$$
From here, by simple integration we get
\begin{align*}
v_1(a_{2N + 1})= & \sum_{k=0}^{N} \int_{a_{2k}/a_{2N + 1}}^{a_{2k + 1}/a_{2N + 1}} \frac{dy}{y^{\alpha}(1 - y)^{1-\alpha}}\\
=&\pi\csc(\alpha\pi)\sum_{k=0}^{N} B_{\alpha}[a_{2k}/a_{2N + 1},a_{2k + 1}/a_{2N + 1}].
%=& -\sum_{k=0}^{N} [\arccos(2a_{2k + 1}/a_{2N + 1} - 1) - \arccos(2a_{2k}/a_{2N + 1} - 1)].
\end{align*}
Moreover,
\begin{equation*}
v_1(a_{2N + 2})=\pi\csc(\alpha\pi)\sum_{k=0}^{N} B_{\alpha}[a_{2k}/a_{2N + 2},a_{2k + 1}/a_{2N + 2}].
%-\sum_{k=0}^{N} [\arccos(2a_{2k + 1}/a_{2N + 2} - 1) - \arccos(2a_{2k}/a_{2N + 2} - 1)].
\end{equation*}
Now, we estimate the term $v_1(a_{2N + 2}) - v_1(a_{2N + 1})$. For this, using the definition of $b_{\alpha}$, $\eta_\alpha$
and~\eqref{b}, we notice that
%\begin{equation*}
%\beta_k(N) := \frac{a_{2k}}{a_{2N + 1}} = 2^{2(k - N) - 1} = \frac{a_{2k + 1}}{a_{2N + 2}},
%\end{equation*}
%and that
%\begin{align*}
%\gamma_{k - 1}(N) := \frac{a_{2(k - 1) + 1}}{a_{2N + 1}} = 2^{2(k - N) - 2} = \frac{a_{2k}}{a_{2N + 2}} =: \delta_k(N),
%\end{align*}
%from which we arrive at the expression
%\begin{align*}
%v_1(a_{2N + 2}) - v_1(a_{2N + 1})= \sum_{k=0}^{N - 1}[\arccos(2\gamma_k - 1) - 2\arccos(2\beta_k - 1) + \arccos(2\delta_k - 1)] - \pi/3.
%\end{align*}
\begin{align*}
\frac{v_1(a_{2N + 2}) - v_1(a_{2N + 1})}{\pi\csc(\alpha\pi)}=&\sum_{k=0}^{N} \left(B_{\alpha}[a_{2k}/a_{2N + 2},a_{2k + 1}/a_{2N + 2}]-B_{\alpha}[a_{2k}/a_{2N + 1},a_{2k + 1}/a_{2N + 1}]\right)\\
\leq& B_{\alpha}[0,b_{\alpha}]-\sum_{k=0}^{N} B_{\alpha}[a_{2k}/a_{2N + 1},a_{2k + 1}/a_{2N + 1}]\\
=& B_{\alpha}[0,b_{\alpha}]-B_{\alpha}[b_{\alpha},1]-\sum_{k=0}^{N-1} B_{\alpha}[a_{2k}/a_{2N + 1},a_{2k + 1}/a_{2N + 1}]\\
=&-\sum_{k=0}^{N-1} B_{\alpha}[a_{2k}/a_{2N + 1},a_{2k + 1}/a_{2N + 1}]\\
\leq&-B_{\alpha}[b_{\alpha}^{3},b_{\alpha}^{2}].
\end{align*}
Therefore, we have that $v_1(a_{2N + 2}) - v_1(a_{2N + 1})\leq-\eta_\alpha$.
%Now, notice that for all $k$ we have $\delta_k < \beta_k < \gamma_k$ and we recall that $\gamma_{k - 1} = \delta_k$. In addition, we see that $\gamma_{N-1} = 1/4$ and therefore $2 \gamma_{N - 1} - 1 = -1/2$ and thus $\arccos(2 \gamma_{N - 1} - 1) = 2\pi/3$. From this, it is possible to see that
%\begin{align*}
%v_1(a_{2N + 2}) - v_1(a_{2N + 1}) \leq & (\pi - 2\pi/3) + \arccos(2\gamma_{N - 1} - 1) - \arccos(2\beta_{N - 1} - 1) - \pi/3\\
%= & \arccos(2^{-2} - 1) - \arccos(2^{-3} - 1)\\ 
%=&-\eta_\alpha.
%\end{align*}
Hence, using the above result and the fact that $v_2, v_3\geq0$, we have that
\begin{align*}
u(a_{2N + 2}) - u(a_{2N + 1})\leq&-\eta_\alpha+v_2(a_{2N+1})+v_3(a_{2N+1}).
\end{align*}
Finally, we estimate the last two terms
\begin{align*}
v_2(a_{2N+1})=&\sum_{k=0}^{N}\int_{a_{2k}}^{a_{2k}+\epsilon_k} \frac{z^{-\alpha}(1-\frac{z-a_{2k}}{\epsilon_k})}{(a_{2N + 1} - z)^{1-\alpha}}dz\\
=&\sum_{k=0}^{N}\epsilon_k\int_{0}^{1} \frac{1-y}{(a_{2k}+\epsilon_k y)^{\alpha}(a_{2N + 1} - a_{2k}-\epsilon_k y)^{1-\alpha}}dy.
\end{align*}
We have $a_{2k}+\epsilon_k y\geq a_0=1$ and, using $\epsilon_k\leq 1$ and~\eqref{condN},
$a_{2N + 1} - a_{2k}-\epsilon_k y\geq a_{2N + 1}-a_{2N}-1= a_{2N}(a_1-1)-1\geq 1$.
It follows
$$
v_2(a_{2N+1})\leq \sum_{k=0}^{N}\epsilon_k\int_{0}^{1}dy=\sum_{k=0}^{\infty}\epsilon_k=\frac{\eta_\alpha}{4}.
$$
%[I do not suceed to prove the inequality above without~\eqref{condN}. You can suppress if there is
%an easy way to do it without the additional condition in~\eqref{condN}]
Similarly, we have that
\begin{align*}
 v_3(a_{2N+1})&\leq\sum_{k=0}^{N-1}\int_{a_{2k+1}-\epsilon_k}^{a_{2k+1}} \frac{z^{-\alpha}(1-\frac{a_{2k+1}-z}{\epsilon_k})}{(a_{2N + 1} - z)^{1-\alpha}}dz+\int_{a_{2N+1}-\epsilon_N}^{a_{2N+1}} \frac{z^{-\alpha}}{(a_{2N + 1} - z)^{1-\alpha}}dz\\
=&\sum_{k=0}^{N-1}\epsilon_k\int_{0}^{1} \frac{1-y}{(a_{2k+1}-\epsilon_k y)^{\alpha}(a_{2N + 1} - a_{2k+1}+\epsilon_k y)^{1-\alpha}}dy\\
&\hspace{7cm}+\int_{a_{2N+1}-\epsilon_N}^{a_{2N+1}} \frac{z^{-\alpha}}{a_{2N + 1}^{1-\alpha}(1 - z/a_{2N + 1})^{1-\alpha}}dz.\\
\end{align*}
To estimate the first term above
we notice first that $a_{2k+1}-\epsilon_k y-1\geq a_1-(1-b_\alpha^2)/4= (b_\alpha-1)(b_\alpha^2-b_\alpha-4)/(4b_\alpha)\geq 0$
since $0<b_\alpha <1$. Moreover, for $k\leq N-1$, $a_{2N + 1} - a_{2k+1}+\epsilon_k y\geq a_{2N + 1}-a_{2N - 1}\geq  a_{2N}(a_1-1)\geq 2$
by~\eqref{condN}.
To estimate the second term, we notice
$$
\int_{a_{2N+1}-\epsilon_N}^{a_{2N+1}} \frac{z^{-\alpha}}{a_{2N + 1}^{1-\alpha}(1 - z/a_{2N + 1})^{1-\alpha}}dz
=\int_{1-\epsilon_N/a_{2N+1}}^{1} \frac{1}{y^{\alpha}(1 - y)^{1-\alpha}}dz\leq \frac{\eta_\alpha}{4},
$$
using again~\eqref{condN}.
It follows
$$
 v_3(a_{2N+1})\leq \sum_{k=0}^{N-1}\epsilon_k\int_{0}^{1}dy+\frac{\eta_\alpha}{4}=\frac{\eta_\alpha}{2}.
$$
Therefore, $u(a_{2N + 2}) - u(a_{2N + 1})\leq-\eta_\alpha/4<0$, and this means that 
$$
\liminf_{t \to \infty} u(t) - \limsup_{t \to \infty} u(t) \leq -\eta_\alpha/4,
$$ 
from which $u$ does not have any limit at infinity.

\medskip

%{\br Now we discuss about the validity of our discussion on equation~\eqref{ode1} when the derivative is of Caputo type.} As it can be seen in Lemma 3.4 in~\cite{diethelm10}, Caputo and Riemann-Liouville derivative of a function $u$ agree if $u(0^+) = 0$ and both fractional derivatives exist. {\br E.T.: This holds if $u$ is an absolutely continuous function. This is not clear for me. However, I think that we can modify $f$ to have a Lipschitz function, fixing its discontinuities by linear functions whose total derivative is small...in that case we have enough regularity to replace R-L derivative by Caputo derivative in~\eqref{ode1}.} 

%%%%%%%%%%%%%%%%%%%%%%%%%%%%%%%%%%%%%%%%%%%%%%%
%%%%%%%%%%%%%%%%%%%%%%%%%%%%%%%%%%%%%%%%%%%%%%%
%%%%%%%%%%%%%%%%%%%%%%%%%%%%%%%%%%%%%%%%%%%%%%%
%%%%%%%%%%%%%%%%%%%%%%%%%%%%%%%%%%%%%%%%%%%%%%%
%%%%%%%%%%%%%%%%%%%%%%%%%%%%%%%%%%%%%%%%%%%%%%%
%%%%%%%%%%%%%%%%%%%%%%%%%%%%%%%%%%%%%%%%%%%%%%%
\section{Ergodic Large Time Behavior.}
\label{secLTB}

In this section we present some cases for which ergodic large time behavior~\eqref{asympt-exp} holds. 
The main assumption here follows the classical requirements of Namah and Roquejoffre~\cite{nr99}, see
Assumptions~\eqref{NR}. 

We have in mind the classical Eikonal case
\begin{eqnarray}\label{Heikonal}
&&  H(x,p)=F(x,p)-f(x)=a(x)|p|-f(x),
\end{eqnarray}
where $a,f:\T^N\to\R$ are Lipschitz continuous, $a(x)\geq \underline{a}>0$ and $f(x)\geq \mathop{\rm min}_{\T^N}f=0$. However, we are able to deal with Hamiltonians with superlinear growth on the gradient.

We notice that the convexity and the coercivity condition in~\eqref{H} leads to a quantitative growth for the Hamiltonian, that is
\begin{equation*}
H(x,p) \geq C^{-1} |p| - C, \qquad \text{for all $x\in\T^N$, $p\in\R^N$,}
\end{equation*}
for some constants $C > 0$. Since a similar condition is found when~\eqref{Hsuper} holds, throughout this section we assume the existence of a constants $C_H > 1$ and $m \geq 1$ such that
\begin{equation}\label{coerc-base}
F(x,p) \geq C_H^{-1} |p|^m - C_H, \qquad \text{for all $x\in\T^N$, $p\in\R^N$.}
\end{equation}

%But in general, the best quantitative coercivity property we can obtain using the
%coercivity in~\eqref{H} and the convexity in~\eqref{NR} is that there exists $C_H>0$ such that
%\begin{eqnarray}\label{coerc-base}
%&&  F(x,p)\geq \frac{1}{C_H}|p|-C_H,\qquad \text{for all $x\in\T^N$, $p\in\R^N$,}
%\end{eqnarray}

%As explained in the introduction, the proof of Namah-Roquejoffre Theorem~\cite[Theorem 1]{nr99} relies
%on three steps, the limiting one being the second one, i.e., to prove that
%$u(\cdot,t)+ct^\alpha$ converges on $Z$ when $t\to +\infty$. The main result
%of this section consists in proving this convergence under some additional assumptions,
%which are stated below. The large time behavior is an easy consequence, see Corollary~\ref{ltb-caputo}.

%The first set of assumptions is concerned with the behavior of the Hamiltonian
%$H(x,p)$ near $p=0$.
%The typical Hamiltonian one has in mind in this framework is the one of the classical Eikonal equation, namely
%\begin{eqnarray}\label{Heikonal}
%&&  H(x,p)=F(x,p)-f(x)=a(x)|p|-f(x),
%\end{eqnarray}
%where $a,f:\T^N\to\R$ are Lipschitz continuous, $a(x)\geq \underline{a}>0$ and $f(x)\geq \mathop{\rm min}_{\T^N}f=0$.
%In this case,~\eqref{H} and \eqref{NR} hold
%and
%\begin{eqnarray}\label{ineg-eikonal}
%F(x,p)\geq \underline{a}|p|, \qquad \text{for all $x\in\T^N$, $p\in\R^N$, $\underline{a}>0$.}
%\end{eqnarray}

We also require some assumption on the behavior of $F$ near $p = 0$ which is not reflected by~\eqref{coerc-base}.
In order to be able to deal with more general Hamiltonian, e.g., smooth ones
which are nonnegative and nondegenerate near $p=0$,
we introduce an additional assumption:
\begin{eqnarray}\label{Hsmooth0}
&& \text{There exists $\nu, r>0$ and $k\geq 1$ such that $F(x,p)\geq \nu |p|^k$ for all $x\in\T^N, p\in B(0,r)$.}
\end{eqnarray}
Thus, if~\eqref{H} or~\eqref{Hsuper} holds, then the later condition together with~\eqref{NR} lead to
\begin{eqnarray}\label{HsmoothR}
&& \text{for every $R>0$, there exists $\nu_R>0$ such that $F(x,p)\geq \nu_R |p|^k$ for all $p\in B(0,R)$.}
\end{eqnarray}
This is a sort of nondegeneracy condition in the sense that $F$ is not too flat around $p=0$.
%%%%
%\begin{remark}\label{remH} \ \\
%%(i) Notice that~\eqref{Hsmooth0} holds for~\eqref{Heikonal} with $k=1$ and $r=+\infty$.\\
%(i) Under this additional assumption~\eqref{Hsmooth0}
%together with~\eqref{H} and \eqref{NR}, we have
%\begin{eqnarray}\label{HsmoothR}
%&& \text{for every $R>0$, there exists $\nu_R>0$ such that $F(x,p)\geq \nu_R |p|^k$ for all $p\in B(0,R)$.}
%\end{eqnarray}
%(ii) A more natural assumption would be to assume~\eqref{Hsmooth0} for every $p\in\R^N$ (i.e., with $r=+\infty$).
%The problem is that the first line in~\eqref{H} does not hold anymore when $k>1$. It would require to prove
%more involved comparison and regularity results for~\eqref{eq}, which is beyond the scope of this paper.
%\end{remark}
%%%%

\medskip
By replacing $f$ with $f-\mathop{\rm min}_{\T^N}f$ in~\eqref{eq-new}, we may assume
without loss of generality that
\begin{eqnarray*}
	\mathop{\rm min}_{\T^N}f =0.
\end{eqnarray*}  
It follows that $c=0$ in~\eqref{ergodic}.

\medskip
As we will see later in the proof of Lemma~\ref{cvusurZ}, the solutions of~\eqref{eq}
for $x\in Z$
are strongly related to the solutions of the ODE $\partial_t^\alpha \E (t)+A|\E(t)|^k=0$,
for which we state a technical lemma.

%%%%%%%%%%%%%%%%%%%%%%%%
\begin{lema}\label{frac-ode}
For every $A>0$ and $k\geq 1$, there exists a unique positive
solution $\E\in C([0,\infty))\cap C^1((0,\infty))$ to
\begin{eqnarray}\label{edocaputo}
&& \partial_t^\alpha \E (t)+A |\E(t)|^k=0, \qquad \E(0)=1,
\end{eqnarray}
such that $\E(t) \searrow 0$ as $t\to +\infty$.
Moreover, there exists $C=C(A,k,\alpha)>0$ and, for all $\e>0$, there exists $C_\e=C_\e(A,k,\alpha)$
such that
\begin{eqnarray}\label{estimE}
\frac{C}{t^{\alpha/k}}\leq \E(t) \leq \frac{C_\e}{t^{\alpha/k-\e}}
\quad \text{for $t$ large enough.}  
\end{eqnarray}
\end{lema}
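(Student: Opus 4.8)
The plan is to treat the fractional ODE~\eqref{edocaputo} as a fixed point problem via the Volterra integral formulation. Recall that $\partial_t^\alpha \E = -A|\E|^k$ with $\E(0)=1$ is equivalent (see~\cite[Theorem 3.24]{diethelm10} or~\cite[Lemma 6.2]{gkmr14}) to
\begin{equation*}
\E(t) = 1 - \frac{A}{\Gamma(\alpha)}\int_0^t (t-s)^{\alpha-1}|\E(s)|^k\,ds.
\end{equation*}
First I would run a standard Picard iteration on a small interval $[0,T_0]$: the map $\mathcal{T}[\E](t) = 1 - \frac{A}{\Gamma(\alpha)}\int_0^t(t-s)^{\alpha-1}|\E(s)|^k ds$ is a contraction on a ball around the constant function $1$ in $C([0,T_0])$ for $T_0$ small (since $k\geq 1$ makes $x\mapsto |x|^k$ locally Lipschitz), giving a unique local continuous solution, which is $C^1$ on $(0,T_0)$ by the smoothing of the kernel. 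Then I would show this solution stays in $[0,1]$ and is nonincreasing, hence extends to all of $[0,\infty)$: the comparison principle for the Caputo operator (as in~\cite{ty17}, or directly from~\eqref{Caputo2-not}) gives $0\leq \E\leq 1$ by comparing with the sub/supersolutions $0$ and $1$; monotonicity follows because at an interior local minimum $\partial_t^\alpha\E \geq 0$ forces $\E=0$ there, and a similar argument rules out interior local maxima other than possibly at points where $\E$ already equals its running supremum — in fact the cleanest route is: $\E$ cannot increase since if $\E(t_1)<\E(t_2)$ for some $t_1<t_2$, evaluating $\partial_t^\alpha\E(t_2) = \int_{-\infty}^{t_2}\frac{\E(t_2)-\E(s)}{|t_2-s|^{1+\alpha}}ds$ together with the equation yields a sign contradiction after choosing $t_2$ to be a near-maximum. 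Thus $\E$ decreases to some limit $\ell\geq 0$.

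Next I would show $\ell=0$. Suppose $\ell>0$. Then $|\E(t)|^k\geq \ell^k>0$ for all $t$, so from the integral equation $\E(t)\leq 1 - \frac{A\ell^k}{\Gamma(\alpha)}\int_0^t(t-s)^{\alpha-1}ds = 1 - \frac{A\ell^k}{\Gamma(\alpha+1)}t^\alpha \to -\infty$, contradicting $\E\geq 0$. Hence $\E(t)\searrow 0$.

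For the two-sided bound~\eqref{estimE} I would use the sub/supersolution method with explicit Mittag-Leffler and power comparison functions, which is where the real work is. For the lower bound: since $\E\leq 1$ and $k\geq 1$ we have $|\E|^k\leq \E$, so $\partial_t^\alpha\E = -A|\E|^k \geq -A\E$, i.e. $\E$ is a supersolution of $\partial_t^\alpha w + Aw = 0$, $w(0)=1$, whose solution is $w(t)=E_\alpha(-At^\alpha)$ by~\eqref{mlf}; the comparison principle then gives $\E(t)\geq E_\alpha(-At^\alpha) \geq \frac{C}{t^\alpha}$ for $t$ large by~\eqref{estim-mlf}. This gives the lower bound only with exponent $\alpha$, not $\alpha/k$; when $k>1$ one must bootstrap: knowing $\E(t)\geq c t^{-\alpha}$, re-insert into $|\E|^k \leq \E^{k-1}|\E| \leq$ (cruder: compare with a power ansatz $\underline{\E}(t)= C(1+t)^{-\alpha/k}$ and use~\eqref{cst-deriv} to check $\partial_t^\alpha\underline{\E} + A\underline{\E}^k \leq 0$ for $C$ small and $t$ large, which is a direct computation since $\partial_t^\alpha$ of a power behaves like a power with exponent lowered by $\alpha$). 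For the upper bound I would likewise test the power function $\overline{\E}_\e(t) = C_\e (1+t)^{-\alpha/k+\e}$: using $\partial_t^\alpha(1+t)^{-\alpha/k+\e} \sim c\, t^{-\alpha/k+\e-\alpha}$ and $\overline{\E}_\e^k \sim C_\e^k t^{-\alpha+\e k}$, one checks $\partial_t^\alpha\overline{\E}_\e + A\overline{\E}_\e^k \geq 0$ for $t$ large provided $\e>0$ (the $\e$ is needed precisely so the Caputo-derivative term, which carries a \emph{negative} sign since the function is decreasing, is dominated; at $\e=0$ the two terms are exactly the same order and the sign is delicate), then apply comparison.

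The main obstacle is the upper bound in~\eqref{estimE}, and more precisely controlling $\partial_t^\alpha$ of the comparison power functions: since these are not pure powers $t^\beta$ but shifted powers $(1+t)^\beta$ (needed to have the right value at $t=0$), formula~\eqref{cst-deriv} does not apply verbatim and one needs an asymptotic estimate $\partial_t^\alpha (1+t)^\beta = c_{\alpha,\beta}t^{\beta-\alpha}(1+o(1))$ as $t\to\infty$, together with a careful check that the error terms do not destroy the strict sign of $\partial_t^\alpha\overline{\E}_\e + A\overline{\E}_\e^k$ for large $t$. The loss of $\e$ in the exponent is exactly the price paid for this lack of scale invariance, and I do not expect the sharp exponent $\alpha/k$ to be attainable by this elementary comparison argument.
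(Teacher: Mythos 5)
There is a genuine gap, and it sits exactly at the point you yourself identify as the main obstacle: the asymptotics of $\partial_t^\alpha$ applied to the shifted power comparison functions. You assert (in the lower-bound bootstrap and again in your closing paragraph) that $\partial_t^\alpha (1+t)^{\beta}=c_{\alpha,\beta}t^{\beta-\alpha}(1+o(1))$ as $t\to\infty$, extrapolating~\eqref{cst-deriv}. This is false for $\beta<0$: \eqref{cst-deriv} concerns increasing pure powers $t^\beta$ with $\beta>0$, whereas for a bounded decreasing profile with value $1$ at $t=0$ the memory of the initial datum dominates. Indeed, with the convention~\eqref{Caputo2-not} (extension by $\phi(0)$ for $s<0$), the history part alone gives
\begin{equation*}
\partial_t^\alpha[0]\,(1+\cdot)^{-p}(t)=\big((1+t)^{-p}-1\big)\,\frac{t^{-\alpha}}{\alpha}\;\leq\;-\frac{t^{-\alpha}}{2\alpha}\quad\text{for $t$ large,}
\end{equation*}
and the remaining part is nonpositive and also $O(t^{-\alpha})$, so that $\partial_t^\alpha(1+t)^{-p}\asymp -t^{-\alpha}$ for every $p>0$, \emph{independently of} $p$. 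This two-sided bound ($-ct^{-\alpha}\leq\partial_t^\alpha(1+t)^{-p}\leq 0$ for $t>1$) is precisely the ``Claim'' on which the paper's proof of the upper bound in~\eqref{estimE} rests.

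The consequences are asymmetric. For the upper bound your conclusion survives: one only needs $|\partial_t^\alpha\overline{\E}_\e|\leq ct^{-\alpha}$, while $A\overline{\E}_\e^{\,k}\sim C_\e^{k}t^{-\alpha+\e k}$ dominates for large $t$ thanks to the $\e$-loss (and, for $k>1$, taking $C_\e$ large even makes the inequality hold on all of $[1,\infty)$); this is exactly the paper's argument, and your parenthetical remark that at $\e=0$ the two terms are of the same order is consistent with the correct asymptotics, not with the formula you wrote. For the lower bound, however, the error is fatal as written: under your asymptotics the derivative term $\sim -cC\,t^{-\alpha/k-\alpha}$ is negligible against $A\underline{\E}^{\,k}\sim AC^{k}t^{-\alpha}>0$, so the required subsolution inequality $\partial_t^\alpha\underline{\E}+A\underline{\E}^{\,k}\leq 0$ has the wrong sign. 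With the correct behavior both terms are of order $t^{-\alpha}$, and the check needs the quantitative bound $\partial_t^\alpha(1+t)^{-\alpha/k}\leq -c_1t^{-\alpha}$ (coming from the history term above), $k>1$, and $C$ small so that $AC^{k-1}\leq c_1$; one must also arrange $\underline{\E}\leq \E$ on the whole initial interval before invoking comparison on $[T,\infty)$, since the operator is nonlocal in time. Note finally that the paper does not prove existence, uniqueness, monotone decay to $0$, nor the lower bound: all of this is quoted from \cite[Theorem 5.10]{fllx18}, and only the upper bound with the $\e$-loss is established, by the same supersolution comparison you propose. Your Picard/comparison sketch of the qualitative part is reasonable (though the monotonicity argument is informal), but the lower-bound bootstrap must either be repaired as indicated or replaced by the citation.
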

%%%%%%%%%%%%%%%%%%%%%%%

%%%%
\begin{remark}\label{rem-ode}
In the Eikonal case~\eqref{Heikonal}, the related ODE~\eqref{edocaputo}
reads $\partial_t^\alpha \E (t)+A\E(t)=0$ (with $k=1$) and
we have an explicit solution as recalled in~\eqref{mlf}, namely
the Mittag-Leffler function $E_\alpha(-At^\alpha)$. The
estimate~\eqref{estimE} is then optimal and given by~\eqref{estim-mlf}.
\end{remark}
%%%%

\begin{proof}[Proof of Lemma~\ref{frac-ode}]
Existence and uniqueness of the positive decreasing solution $\E$
of~\eqref{edocaputo} satisfying the lower bound
in~\eqref{estimE} is given by~\cite[Theorem 5.10]{fllx18}. 

Concerning estimates~\eqref{estimE}, by Remark~\ref{rem-ode} we can concentrate on the case $k > 1$.
Below, $c$ is a positive constant which may change line to line. Also, by $\partial^\alpha_t (1 + t)^{-p}$ we mean $\partial^\alpha_t (1 + \cdot)^{-p}(t)$.

\medskip

\noindent
\textit{Claim:} For each $p > 0$, there exists $c > 0$ just depending on $\alpha$ and $p$ such that
\begin{equation*}
-c t^{-\alpha} \leq \partial_t (1 + t)^{-p} \leq 0 \quad \mbox{for all} \ t > 1.
\end{equation*}

The upper bound is obvious. The lower bound can be obtained by a combination of~\cite[p.193]{diethelm10} and~\cite[Chapter 15, 7.3]{as64}, but we present here an alternative proof for completeness.

Using the definition of Caputo derivative, for $t > 1$ we see that
\begin{align*}
\partial^\alpha_t (1 + t)^{-p} & \geq \partial^\alpha_t[t/2, t] (1 + t)^{-p} + \partial^\alpha_t[0, t/2] (1 + t)^{-p}+  \int_{-\infty}^{0} \frac{ - 1}{|t - z|^{1 + \alpha}}dz \\
& \geq \partial^\alpha_t[t/2, t] (1 + t)^{-p} + \partial^\alpha_t[0, t/2] (1 + t)^{-p} - c t^{-\alpha},
\end{align*}
for some $c > 0$ just depending on $\alpha$. Using the Mean Value Theorem, there exists a constant $c$ depending on $p$ such that 
\begin{equation*}
\partial^\alpha_t[t/2, t] (1 + t)^{-p} \geq -c (1 + t)^{-p - 1} \int_{t/2}^{t} |t - z|^{- \alpha} dz \geq -c t^{-p - \alpha},
\end{equation*}
meanwhile, neglecting positive terms, we see that
\begin{align*}
\partial^\alpha_t[t/2, t] (1 + t)^{-p} \geq -\int_{0}^{t/2} \frac{(1 + z)^{-p}}{|t - z|^{1 + \alpha}} dz \geq -\int_{0}^{t/2} \frac{dz}{|t - z|^{1 + \alpha}} \geq -c t^{-\alpha},
\end{align*}
and joining the above inequalities we conclude the Claim.

%By~\cite[p.193]{diethelm10}, we have, for $p>0$,
%$\partial_t^\alpha (1+t)^{-p} =-\hat{C} t^{1-\alpha} \,\rule{0cm}{0cm}_2 F_1(1,1+p;2-\alpha;-t)$
%where $\rule{0cm}{0cm}_2 F_1$ is the Gauss’ hypergeometric function.
%By~\cite[Chapter 15, 7.3]{as64}, we obtain the asymptotic behavior
%of $\rule{0cm}{0cm}_2 F_1$  for large $t$, which yields
%$\partial_t^\alpha (1+t)^{-p}\sim -\hat{C} t^{-\alpha}$ as $t\to +\infty$. Notice that
%$\hat{C}$ depends on $p$ but the order is $\alpha$ for all $p>0$.
%It makes the asymptotic behavior of $\E(t)$ difficult to
%catch. 

Let $\epsilon > 0$ be small enough in order to have $p_\epsilon := \alpha/k - \epsilon > 0$. Take $C > 0$ large enough such that $C(1 + t)^{-p_\e} \geq C 2^{-p_\epsilon} \geq \E$ in $[0,1]$. By the Claim, it is possible to take $C$ larger if it is necessary to get
\begin{align*}
\partial_t^\alpha C(1+t)^{-\alpha/k +\e} + (C(1+t)^{-\alpha/k +\e})^k\geq 0 \quad \mbox{in} \ [1,+\infty).
\end{align*}
Then, by comparison, we arrive at $0\leq \E(t)\leq C_\e(1+t)^{-\alpha/k +\e}$
for all $t\geq 0$. This concludes the proof.
%
%Using this, since $\partial_t^\alpha (1+t)^{-\alpha/k +\e}$ is a continuous function on $[0,\infty)$
%which behaves like  $-\hat{C} t^{-\alpha}$ at infinity, for all $\e >0$,
%there exists $C_\e >0$ large enough 
%such that $\partial_t^\alpha C_\e(1+t)^{-\alpha/k +\e} + (C_\e(1+t)^{-\alpha/k +\e})^k\geq 0$
%for all $t>0$. By comparison, we obtain $0\leq \E(t)\leq C_\e(1+t)^{-\alpha/k +\e}$
%for all $t\geq 0$.
\end{proof}

%%%%%%%%%%%%%%%%%%%%%%%
%%%%%%%%%%%%%%%%%%%%%%%
%%%%%%%%%%%%%%%%%%%%%%%

In order to state our key result to obtain the large time behavior,
we need some definitions. Given two points $x_0, x_1 \in \R^N$, we denote $[x_0, x_1]$ the line segment joining $x_0$ and $x_1$. For a set of points $x_0, x_1,...,x_n$, with $n\in\N$, we denote
$$
[x_0, ..., x_n] = \bigcup \limits_{i=1}^{n} [x_i, x_{i-1}],
$$
that is, the polygonal curve joining the points $x_i, \ i=0,...,n$. The length of a finite polygonal line $[x_0,x_1,\cdots, x_n]$, $x_i\in\T^N$, 
is given by $\displaystyle \ell ([x_0,x_1,\cdots, x_n])= \sum_{i=1}^{n}|x_i-x_{i-1}|$.
A continuous curve $\gamma :[0,1]\to \T^N$ is said to be \textit{rectifiable}
if
\begin{eqnarray*}
\ell(\gamma):=\mathop{\rm sup} \limits_{\scriptsize\begin{array}{c}n\in\N\\ 0:=t_0<t_1<\cdots <t_n=1\end{array}}
\ell ([\gamma(t_0),\gamma(t_1),\cdots,\gamma(t_n)])<+\infty.
\end{eqnarray*}
We call $\ell(\gamma)$ the length of $\gamma$.

As explained in the introduction, the proof of Namah-Roquejoffre Theorem~\cite[Theorem 1]{nr99} relies
on three steps, the limiting one being the second one, i.e., to prove that
$u(\cdot,t)$ converges on $Z=\{f=0\}$ when $t\to +\infty$ (recall that we assume $c=0$).
We prove now this key result under the additional assumption
\begin{eqnarray}\label{ming}
&& {\rm argmin}\{g\}\cap Z\not=\emptyset.
\end{eqnarray}

The large time behavior is an easy consequence, see Corollary~\ref{ltb-caputo}.

%%%%%%%%
\begin{teo}\label{cvusurZ1}
Assume~\eqref{H} or~\eqref{Hsuper},~\eqref{NR},~\eqref{Hsmooth0} and~\eqref{ming}.
Assume further that for each $z \in Z$, there exists $x_0 \in Z \cap \mathrm{argmin} \{ g \}$ and a
rectifiable curve $\gamma: [0, 1] \to \T^N$  such that $\gamma(0) = x_0$, $\gamma(1) = z$ and $\gamma(t) \in Z$ for all $t \in [0,1]$.
Then, the unique solution $u$ to~\eqref{eq}-\eqref{u0} converges on $Z$, i.e.,
\begin{eqnarray*}
&& \text{for every  $x\in Z$,  $u(x,t)\to \min \{g\}$ as $t\to +\infty$.}
\end{eqnarray*}
\end{teo}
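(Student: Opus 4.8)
The plan is to establish the two-sided estimate announced in the introduction,
\[
\min_{\T^N}g\ \leq\ u(z,t)\ \leq\ \min_{\T^N}g + {\rm Lip}(g)\,\ell(\gamma)\,\E(t)\qquad\text{for all }z\in Z,\ t>0,
\]
where $\E$ is the solution of the fractional ODE $\partial_t^\alpha\E + A|\E|^k = 0$, $\E(0)=1$, from Lemma~\ref{frac-ode}, with $A,k$ coming from the nondegeneracy~\eqref{HsmoothR} and $c_H$. Since $\E(t)\searrow 0$ as $t\to+\infty$ by Lemma~\ref{frac-ode}, this immediately forces $u(z,t)\to\min\{g\}$ and proves the theorem. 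The lower bound is easy: since $f\ge 0$ and $c=0$, the constant $\min_{\T^N}g$ is a subsolution of~\eqref{eq-new} (as $F(x,0)=0\le f(x)$), so by the comparison principle of~\cite{ty17} applied with the initial datum $g\ge\min g$ we get $u\ge\min\{g\}$ everywhere, in particular on $Z$.

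The substance is the upper bound, which I would obtain by constructing a supersolution. First, by the regularity Theorems~\ref{teoHoldert} and~\ref{holderx}, $u$ is bounded and has a modulus of continuity in space uniform in $t$; in particular, along the curve $\gamma$ one controls oscillations of $u$. The key idea is: fix $z\in Z$ and the rectifiable curve $\gamma\subset Z$ joining $x_0\in Z\cap{\rm argmin}\{g\}$ to $z$, of length $\ell:=\ell(\gamma)$. I would build a function of the form
\[
w(x,t) = \min_{\T^N}g + {\rm Lip}(g)\,\Big(d_\gamma(x) + \ell\,\big(\E(t)-1\big)_+ \ \text{-type correction}\Big),
\]
more precisely a spatially Lipschitz profile that at $t=0$ dominates $g$ (using $g(x)\le\min g + {\rm Lip}(g)|x-x_0|\le \min g + {\rm Lip}(g)\,\ell$ along the connecting curve, and elsewhere the full Lipschitz bound on $\T^N$), and whose time dependence is driven by $\E$. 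The point of restricting to $Z$ is that there $f$ attains its minimum $0$, so the equation is $\partial_t^\alpha u + F(x,Du)=0$ with $F\ge 0$; feeding a profile with small gradient into~\eqref{HsmoothR} gives $F(x,Dw)\ge \nu_R|Dw|^k$, and one checks $\partial_t^\alpha w + F(x,Dw)\ge 0$ in $Z\times(0,\infty)$ by matching the $\E$-ODE. A technically cleaner route, which I expect to use, is to work pointwise along $\gamma$: set $h(t):=\sup_{x\in Z}\big(u(x,t)-\min g\big)^+$ or rather track $u$ along $\gamma$ parametrized by arclength and show the ``profile'' $x_0\mapsto z$ contributes at most ${\rm Lip}(g)\,\ell\,\E(t)$, combining the initial bound at $x_0$ (where $g$ is minimal) with the Lipschitz-in-space estimate and the decay of $\E$.

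The main obstacle — and the reason the geometric hypotheses~\eqref{ming} and the connecting-curve assumption are needed — is exactly that, unlike $\alpha=1$, $\partial_t^\alpha(u(\cdot,t))\le 0$ on $Z$ does \emph{not} give monotonicity or convergence (this is the content of Section~\ref{seccounterexample}). So one cannot argue directly on $Z$; instead one must transport information from ${\rm argmin}\{g\}$, where at $t=0$ the solution equals $\min g$, along the curve, paying a Lipschitz cost ${\rm Lip}(g)\,\ell(\gamma)$, and then let the fractional dissipation (quantified by $\E(t)\to 0$) kill this excess. Making the comparison argument rigorous requires care: one must ensure the test profile is a genuine viscosity supersolution up to the boundary of the time interval and handle the nonlocal-in-time term (the ``memory'' from $t<0$, where $u\equiv g$); here I would use the splitting $\partial_t^\alpha = \partial_t^\alpha[t-\delta] + \partial_t^\alpha[t-\delta,t]$ as in~\eqref{Caputo2-not} and the explicit computation~\eqref{cst-deriv}, exactly as in the proof of Theorem~\ref{holderx}. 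Once the supersolution is in place, the comparison principle of~\cite{ty17} on $Z$ (viewed with the Dirichlet-type data supplied by the bound at $x_0$) closes the argument, and passing $t\to+\infty$ with $\E(t)\to 0$ yields $u(z,t)\to\min\{g\}$.
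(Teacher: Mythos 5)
Your overall strategy (lower bound by the constant subsolution $\min g$, upper bound $u(z,t)\leq \min g+{\rm Lip}(g)\,\ell(\gamma)\,\E(t)$ via a supersolution built from ${\rm Lip}(g)$, the geometry of the curve and the solution $\E$ of the fractional ODE) is the right one and matches the paper in spirit, but the way you propose to run the comparison has a genuine gap. You verify the supersolution inequality and invoke comparison ``in $Z\times(0,\infty)$'' with ``Dirichlet-type data at $x_0$''. This cannot work as stated: $Z$ is merely a compact level set of $f$, generically with empty interior, and $u$ solves the equation on all of $Q=\T^N\times(0,\infty)$; a viscosity comparison argument needs the supersolution property on an open domain in space, with the Dirichlet control prescribed on its genuine (parabolic) boundary. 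The real work, which your sketch omits, is the behaviour of the supersolution \emph{off} $Z$, where $f>0$ and the nondegeneracy~\eqref{HsmoothR} is useless: one must add a term like $M\,d_{[x_i,x_{i+1}]}(x)$ with $M$ large so that the coercivity~\eqref{coerc-base} dominates $\|f\|_\infty$ there, while on the segment (where $f=0$) the small gradient $L\E(t)$ feeds into~\eqref{HsmoothR} and is matched against $\partial_t^\alpha\E$ through the choice of $A$ in the ODE. Moreover the information is transported from $x_0$ to $z$ by an induction along the vertices of a polygonal line: each $U_i$ is a supersolution on $Q_i=Q\setminus\{x_i\}\times[0,+\infty)$ and one needs a Cauchy--Dirichlet comparison principle (Namba~\cite{namba18}, or an adaptation of~\cite{ty17}) with the Dirichlet line $\{x_i\}\times(0,+\infty)$, where the bound obtained at the previous step is imposed. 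Your alternative suggestion of tracking $h(t)=\sup_{x\in Z}(u(x,t)-\min g)^+$ or ``following $\gamma$ by arclength'' does not replace this: a sup over $x$ does not satisfy a usable Caputo differential inequality, and a signed Caputo derivative would anyway not give convergence, as Section~\ref{seccounterexample} shows.

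A second missing step is the passage from finite polygonal lines to the general rectifiable curve assumed in the theorem. The supersolution calculus (directional derivative of the test function along a straight segment, distance to a segment) is only tractable for polygonal lines, and the polygonal approximations of $\gamma$ need not stay inside $Z$. The paper resolves this by replacing $f$ with $f_\e=(f-{\rm Lip}(f)\,\e)_+$, whose zero set contains the $\e$-neighbourhood of $Z$ and hence the approximating polygon, applying the polygonal lemma (Lemma~\ref{cvusurZ}) to the solution $u_\e$ of the modified problem, and controlling $\|u-u_\e\|\leq C\e t^\alpha$ by comparison; since $\ell(\gamma_{k_\e})\leq\ell(\gamma)<\infty$ the constant $A$ and hence $\E$ can be fixed independently of $\e$, and one concludes by letting $\e\to 0$ \emph{before} $t\to+\infty$ (the order matters because the error grows like $t^\alpha$). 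Without this regularization, or an argument that a supersolution can be built directly along a general rectifiable curve, your proof covers only polygonal curves contained in $Z$ and does not yield the stated theorem.
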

%%%%%%%%

Before giving the proof
of the theorem, we state the following key lemma.

%%%%%%%%
\begin{lema}\label{cvusurZ}
Assume hypotheses of Theorem~\ref{cvusurZ} hold.
%Assume~\eqref{H},~\eqref{NR},~\eqref{Hsmooth0} and~\eqref{ming}
Let $z \in Z$, $x_0 \in Z \cap \mathrm{argmin} \{ g \}$ and assume
that there exists a finite polygonal line $\gamma:=[x_0,x_1,\cdots, x_n\!\!:=z]$
lying in $Z$ and joining $x_0$ to $z$.
Then, the unique solution $u$ to~\eqref{eq}-\eqref{u0} satisfies
\begin{eqnarray}\label{ineg-finite}
&& \min \{g\} \leq u(z,t)\leq \min \{g\} + {\rm Lip}(g) \ell(\gamma)\E(t),
\end{eqnarray}
where $\E(t)\searrow 0$ as $t\to +\infty$ is a function which
depends on $H$, $||f||_\infty$, ${\rm Lip}(g)$, $N$ and $\ell(\gamma)$.
\end{lema}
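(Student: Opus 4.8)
The plan is to build an explicit supersolution of~\eqref{eq-new} by propagating the fractional ODE profile $\E$ along the polygonal curve $\gamma$, one segment at a time. First I would handle the lower bound: since $f\ge 0$ and $c=0$, the constant function $\min\{g\}$ is a subsolution of~\eqref{eq-new}, and since $g\ge \min\{g\}$ on $\T^N$, the comparison principle of~\cite{ty17} gives $u(z,t)\ge \min\{g\}$ for all $z,t$; nothing subtle there. For the upper bound, I would first treat the single-segment case: $z=x_1$, $x_0\in Z\cap\mathrm{argmin}\{g\}$, $|x_0-z|=\ell(\gamma)$. Consider the candidate
\[
w(x,t) := \min\{g\} + {\rm Lip}(g)\,|x-x_0| + M\,\big({\rm Lip}(g)\,\ell(\gamma)\big)\,\E(t),
\]
or rather a variant where the spatial weight is linear in $|x-x_0|$ and the time weight carries the $\E$-decay; one checks $w(\cdot,0)\ge g$ (by the Lipschitz bound on $g$), and that on the segment $[x_0,z]\subset Z$ one has $f=0$, so we need $\partial_t^\alpha w + F(x,Dw)\ge 0$ there. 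Using $\partial_t^\alpha$ acts only on the $\E$-term, $\partial_t^\alpha(c\E)=c\,\partial_t^\alpha\E = -cA|\E|^k$ by~\eqref{edocaputo}, while $F(x,Dw)\ge \nu_R|Dw|^k$ by the nondegeneracy~\eqref{HsmoothR} with $R$ controlled by ${\rm Lip}(g)$; choosing $A=\nu_R$ (the constant defining $\E$) and tuning the constant $M$ makes the supersolution inequality hold on $Z$. Off $Z$ we only need $f\ge 0$ together with coercivity~\eqref{coerc-base}, which is easy since $|Dw|$ is bounded. Then comparison yields $u(z,t)\le w(z,t)=\min\{g\}+{\rm Lip}(g)\,\ell(\gamma)\,\E(t)$ up to the harmless constant, which one absorbs by redefining $\E$.

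For the general polygonal line $[x_0,x_1,\dots,x_n=z]$ I would iterate: having established $u(x_j,t)\le \min\{g\}+{\rm Lip}(g)\,\ell([x_0,\dots,x_j])\,\E(t)=:\min\{g\}+L_j\E(t)$, I treat $u(\cdot,\cdot)$ restricted near the next segment $[x_j,x_{j+1}]\subset Z$ as solving a problem with ``boundary data'' $u(x_j,\cdot)$ at the endpoint, and build a local supersolution of the form $\min\{g\}+L_j\E(t)+{\rm Lip}(g)\,|x-x_j|$ on a neighbourhood of that segment. The point is that at $x_j$ this majorizes $u$ by the induction hypothesis, at $t=0$ it majorizes $g$, and on $[x_j,x_{j+1}]$ (where $f=0$) the computation above goes through verbatim because $\partial_t^\alpha$ still only sees the $\E$ term and $F\ge\nu_R|Dw|^k$ still applies; comparison on this strip then propagates the estimate to $x_{j+1}$ with $L_{j+1}=L_j+{\rm Lip}(g)|x_{j+1}-x_j|$. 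After $n$ steps one reaches $L_n={\rm Lip}(g)\,\ell(\gamma)$, giving~\eqref{ineg-finite}.

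The main obstacle is the iteration step, specifically making the ``local comparison with boundary data along a segment'' rigorous: one is comparing $u$ with a supersolution on a tubular neighbourhood $U$ of the segment $[x_j,x_{j+1}]$, and the comparison principle of~\cite{ty17} is stated on $\T^N$ (or a domain with genuine boundary conditions in both $x$ and $t$), not on an open subset with an interior ``tip'' condition at $x_j$. The resolution is to use that $\E$ is nonincreasing and that the supersolution barrier $\min\{g\}+L_j\E(t)+{\rm Lip}(g)\,\mathrm{dist}(x,[x_0,\dots,x_j])$ — built from the distance to the \emph{whole} sub-polygonal — is globally defined on $\T^N$ and, off $Z$, a supersolution thanks to $f\ge 0$ and coercivity, so one never actually localizes; the segment structure only enters through where $f=0$. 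I would therefore phrase the whole argument as a single global comparison between $u$ and the barrier
\[
W(x,t)=\min\{g\}+{\rm Lip}(g)\,d_\gamma(x)+{\rm Lip}(g)\,\ell(\gamma)\,\E(t),
\]
where $d_\gamma(x)$ is a suitable ``length-to-$x_0$-along-$\gamma$'' function (Lipschitz with constant $1$, equal to $\ell([x_0,\dots,x_i])$ at $x_i$, and whose gradient lies in the relevant ball so that~\eqref{HsmoothR} applies on $Z$), checking $W(\cdot,0)\ge g$ and $\partial_t^\alpha W+F(x,DW)\ge f(x)$ everywhere — the on-$Z$ inequality from~\eqref{edocaputo} and~\eqref{HsmoothR}, the off-$Z$ inequality from $f\ge 0$ and~\eqref{coerc-base} after noting $\partial_t^\alpha(\ell(\gamma)\E)\ge -A\ell(\gamma)|\E|^k$ is bounded. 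The delicate calculus point is verifying that $\partial_t^\alpha$ of the ($t$-independent in $x$) barrier really reduces to $\ell(\gamma)\,\partial_t^\alpha\E$ with no cross terms, which is immediate from linearity of~\eqref{Caputo2-not}, and that the viscosity (rather than classical) supersolution check at the non-smooth points of $d_\gamma$ is valid, handled by the usual observation that at a minimum of $W-\varphi$ one can perturb away from the ridge.
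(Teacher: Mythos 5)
Your barrier has the decay in the wrong place, and this breaks the estimate you are trying to prove. In your single-segment candidate $w(x,t)=\min\{g\}+{\rm Lip}(g)|x-x_0|+M{\rm Lip}(g)\ell(\gamma)\E(t)$, and likewise in your global barrier $W(x,t)=\min\{g\}+{\rm Lip}(g)d_\gamma(x)+{\rm Lip}(g)\ell(\gamma)\E(t)$, the spatial term is constant in time, so at the target point you only get $u(z,t)\leq \min\{g\}+{\rm Lip}(g)\,d_\gamma(z)+{\rm Lip}(g)\ell(\gamma)\E(t)$, and the term ${\rm Lip}(g)\,d_\gamma(z)={\rm Lip}(g)\ell(\gamma)$ does not decay; it cannot be ``absorbed by redefining $\E$'' since $\E\to 0$. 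For the same reason your induction step does not close: your local barrier evaluated at $x_{j+1}$ equals $\min\{g\}+L_j\E(t)+{\rm Lip}(g)|x_{j+1}-x_j|$, which is not of the form $\min\{g\}+L_{j+1}\E(t)$, so the hypothesis needed at the next vertex is never produced. The paper avoids this by multiplying the spatial term by $\E(t)$, taking $U_i(x,t)=L\sum_{j\leq i}|x_j-x_{j-1}|\E(t)+L|x-x_i|\E(t)+M\,d_{[x_i,x_{i+1}]}(x)$, so that the whole bound at $x_{i+1}$ decays; the price is that on the segment the test-function gradient is only of size $L\E(t)$, which is exactly why the nondegeneracy \eqref{HsmoothR} is invoked to produce $\nu_{L+M}L^k|\E(t)|^k$ against $\partial_t^\alpha U_i\geq -L(\sqrt N+\ell(\gamma))A|\E(t)|^k$, with $A$ fixed as in \eqref{defA}.

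The second gap is your off-$Z$ verification. You claim it is ``easy since $|Dw|$ is bounded,'' but coercivity \eqref{coerc-base} only helps when the gradient is large: with $|Dw|\leq {\rm Lip}(g)$ and $\partial_t^\alpha w\leq 0$ you have no way to dominate $f(x)>0$ off the curve (in the Eikonal case you would need $a(x){\rm Lip}(g)\geq f(x)+|\partial_t^\alpha w|$, which has no reason to hold). This is precisely the role of the non-decaying term $M\,d_{[x_i,x_{i+1}]}(x)$ in the paper, with $M$ chosen in \eqref{defM} to include $C_H\|f\|_\infty$, so that off the segment the gradient has size at least $M-L$ and coercivity beats $\|f\|_\infty$ plus the Caputo term; crucially this term vanishes on the segment and at the target vertex, so it does not pollute the estimate. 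Finally, your concern about comparison on a tubular neighbourhood is resolved in the paper not by a global barrier but by a Cauchy--Dirichlet comparison on $Q_i=Q\setminus\{x_i\}\times[0,+\infty)$ (citing \cite{namba18}, \cite{ty17}), where the Dirichlet data on the line $\{x_i\}\times(0,+\infty)$ is exactly the bound inherited from the previous step; your lower bound via comparison with the constant $\min\{g\}$ is correct and matches the paper.
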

%%%%%%%%

\begin{proof}[Proof of Lemma~\ref{cvusurZ}]
Without loss of generality we can assume $\min \{g\} = 0$. From this and~\eqref{NR}, we obtain that $0$ is a subsolution of~\eqref{eq}-\eqref{u0} in $Q$. Therefore, by comparison, $0\leq u$ in $Q$.

For the upper bound, the idea is to construct a function $U$ such that $U(z,t) \searrow 0$ as $t \to +\infty$ and such that $u(z,t) \leq U(z,t)$ for all $t > 0$. This is performed by an inductive procedure, building a sequence of functions $(U_i)_{0\leq i\leq n-1}$ which are supersolutions for the equation solved by $u$ but in the set $Q_i := Q \setminus \{ x_i \} \times [0,+\infty)$, with some control on the line $\{ x_i \} \times [0,+\infty)$ in order to use comparison principles for the Cauchy-Dirichlet problem.
%%of~\eqref{eq}-\eqref{u0} such that
%\begin{eqnarray}\label{majUi}
%U_{i-1}(x_i,t)\leq  {\rm Lip}(g)\sum_{j=1}^i |x_j-x_{j-1}|\E(t)\qquad \text{for all $t\geq 0$.}
%\end{eqnarray}
%Since, by comparison, we have
%\begin{eqnarray}\label{compuU}
%0\leq u\leq U_i \qquad\text{in $Q$},
%\end{eqnarray}
%the result~\eqref{ineg-finite} follows by choosing $i=n$ in~\eqref{majUi}.

We divide the proof in several steps.

\smallskip

\noindent{\it Step 1. Definition of $\E(t)$.}
By Lemma~\ref{frac-ode}, for every $A>0$ and $k\geq 1$, there exists a unique positive
solution $\E\in C([0,\infty))\cap C^1((0,\infty))$ to~\eqref{edocaputo}
such that $\E(t) \searrow 0$ as $t\to +\infty$.
Notice that, since $\E$ is nonincreasing, $\partial_t^\alpha \E (t)\leq 0$.
%Moreover, for all $\tilde{A}\geq A,$ we have
%$\partial_t^\alpha \E (t)+\tilde{A}|\E(t)|^k\geq 0$.

We now define $A$ and other constants, the definition of which
will be clear below.
We set $L := \mathrm{Lip}(g)$ and
\begin{eqnarray}\label{defM}
M:= L+ C_H^2 + C_H ||f||_\infty + LC_H(\sqrt{N} + \ell(\gamma)),
\end{eqnarray}
where $C_H$ appears in~\eqref{coerc-base} and
$\sqrt{N}={\rm diam}(\T^N)$.

From~\eqref{Hsmooth0} and~\eqref{HsmoothR}, we may define $\nu_{L+M}>0$ such that
\begin{eqnarray}\label{HsmoothRbis}
&& F(x,p)\geq \nu_{L+M} |p|^k  \quad \text{ for } |p|\leq L+M.
\end{eqnarray}
We then fix
\begin{eqnarray}\label{defA}
A=\frac{\nu_{L+M}L^{k-1}}{\sqrt{N} + \ell(\gamma)}
\end{eqnarray}
in~\eqref{edocaputo}.
Notice that we may assume without loss of generality that
that $A\leq 1$ by decreasing $\nu_{L+M}>0$ if necessary.
\smallskip

\medskip

\noindent{\it Step 2. Definition of the function $U_i$, $0\leq i\leq n-1$.}
We set
\begin{equation*}
U_0(x,t) = L |x - x_0|\E (t) + M d_{[x_0,x_1]}(x)
\end{equation*}
and
\begin{equation}\label{inductiveUi}
U_i(x,t) = L\sum_{j=1}^{i}|x_j-x_{j-1}|\E (t) +L|x - x_i| \E (t) + M d_{[x_i,x_{i+1}]}(x), \qquad 1\leq i\leq n-1,
\end{equation}
where $M$ is given by~\eqref{defM} and
$d_{[x_i,x_{i+1}]}$ denotes the (periodic) distance function to the segment $[x_i,x_{i+1}]$, that is, for each $x \in \T^N$ (cast as a point in $[0,1)^N$), we write
$$
d_{[x_i, x_{i + 1}]}(x) = \inf_{y \in [x_i, x_{i + 1}], k \in \Z^N} |x + k - y|.
$$

This is a $1$-Lipschitz continuous. At the points where it is differentiable we have the gradient meets
$\widehat{x - p_i}$, where $p_i$ is the projection of $x$ to the segment, from which $|D d_{[x_i,x_{i+1}]}(x)|=1$. In addition, for each point in the set of non differentiability of $d_{[x_i, x_{i+1}]}$ which do not lie on the segment $[x_i, x_{i+1}]$, there is not $C^1$ function touching the function from below.

\medskip
\noindent{\it Step 3. The initial supersolution $U_0$.} We prove actually that $U_0$
is a supersolution.
At first, for $t=0$ and all $x\in\T^N$, since $L= \mathrm{Lip}(g)$ and $g(x_0)=0$, we have 
$$
U_0(x,0)\geq L|x-x_0|\geq g(x)-g(x_0)=g(x).
$$
If $t>0$, $x\not\in [x_0,x_1]$ and $U_0$ is $C^1$ at $(x,t)$, by the choice of the constant $M_0$ we use the coercivity properties of the Hamiltonian to write the following computation
holds
\begin{align*}
\partial_t^\alpha U_0+F(x,DU_0)&-f(x)\\
&=
L|x-x_0|\partial_t^\alpha \E (t) +F\left( x, L\E(t)\widehat{x-x_0}+M_0 Dd_{[x_0,x_1]}(x)\right)-f(x)\\
&\geq
L\sqrt{N}\partial_t^\alpha \E (t) + \frac{1}{C_H}\left| L\E(t)\widehat{x-x_0}+M_0 Dd_{[x_0,x_1]}(x)\right| -C_H-||f||_\infty\\
&\geq
\frac{M}{C_H}-\frac{L}{C_H}-C_H-||f||_\infty
+L\sqrt{N}\left(\partial_t^\alpha \E (t) +A|\E(t)|^k\right) -L\sqrt{N}A|\E(t)|^k\\
&\geq
\frac{M}{C_H}-\frac{L}{C_H}-C_H-||f||_\infty -L\sqrt{N}A,
\end{align*}
where we used~\eqref{coerc-base}, $0\leq \E(t)\leq 1$, $\partial_t^\alpha \E (t)\leq 0$,~\eqref{edocaputo}
and $\sqrt{N}={\rm diam}\,\T^N$.
By the choice of $M$ in~\eqref{defM} and since $A\leq 1$,
the right hand side of the previous inequality is nonnegative. So $U_0$ is a supersolution outside~$[x_0,x_1]$.

Now, let  $t>0$, $x\in [x_0,x_1]$ and consider any $C^1$ test-function $\varphi$ touching $U_0$ from below at $(x,t)$, i.e.,
$U_0(y,s)\geq \varphi(y,s)$ and $U_0(x,t)=\varphi(x,t)$. 
Since $U_0$ is sufficiently smooth in time, the following computation holds in a classical way,
\begin{align*}
\partial_t^\alpha \varphi (x,t)
= 
\int_{-\infty}^t \frac{\varphi(x,t)-\varphi(x,s)}{|t-s|^{1+\alpha}}ds
&\geq
\int_{-\infty}^t \frac{U_0(x,t)-U_0(x,s)}{|t-s|^{1+\alpha}}ds\\
&=\partial_t^\alpha U_0 (x,t)
=L|x-x_0|\partial_t^\alpha \E (t).
\end{align*}
Since the right hand side of the above inequality is zero at $x=x_0$ and
$F(x,p)\geq 0$, the supersolution property holds at $x=x_0$. 

It remains to prove that $U_0$ is a supersolution for
$x\in [x_0,x_1]\setminus \{x_0\}$. In this case, $x-h \widehat{x-x_0}\in [x_0,x_1]$ for $h>0$
enough small and $d_{[x_0,x_1]}(x)=d_{[x_0,x_1]}(x-h \widehat{x-x_0})=0$. It follows
\begin{align*}
\varphi (x-h \widehat{x-x_0},t)-\varphi (x,t)
&\leq
U_0 (x-h \widehat{x-x_0},t)-U_0 (x,t)\\
&= L|x-h \widehat{x-x_0}-x_0|\E(t)-L|x-x_0|\E(t)=-h\E(t).
\end{align*}
Dividing by $h$ and letting $h\searrow 0$ we obtain a lower-bound for $|D\varphi(x,t)|$,
\begin{eqnarray}\label{borneinf}
&& |D\varphi(x,t)|\geq \langle  D\varphi(x,t), \widehat{x-x_0}\rangle \geq L\E(t).
\end{eqnarray}
Noticing that $U_0$ is Lipschitz continuous with constant $L+M$ in space, we have
also an upper-bound $|D\varphi(x,t)|\leq L+M$.
For $x\in [x_0,x_1]\setminus \{x_0\}$, recalling $f(x)=0$, we use the behavior of $F$ near the origin in~\eqref{HsmoothR} together with
~\eqref{borneinf} to write
\begin{align*}
\partial_t^\alpha \varphi (x,t)+F(x,D\varphi(x,t))&-f(x)\\
&\geq
\partial_t^\alpha U_0 (x,t)+ \nu_{L+M}|D\varphi(x,t)|^k\\
&=
L|x-x_0|\partial_t^\alpha \E(t)+ \nu_{L+M}L^k|\E(t)|^k\\
&\geq 
L\sqrt{N}\left( \partial_t^\alpha \E(t) + A|\E(t)|^k \right)-L\sqrt{N}A|\E(t)|^k
+\nu_{L+M}L^{k}|\E(t)|^k\\
&\geq
L \left( \nu_{L+M}L^{k-1}-A\sqrt{N}\right)|\E(t)|^k
\end{align*}
by~\eqref{edocaputo}.
Recalling the choices of $A$ in~\eqref{defA},
the right hand side of the above inequality is nonnegative. 

Since the viscosity inequality follows at once in the points where the distance function cannot be touched from below, the previous discussion leads to conclude that
$U_0$ is a supersolution of~\eqref{eq}-\eqref{u0} in $Q$.
By comparison, we obtain $u(x,t) \leq U_0(x,t)$ for all $(x,t) \in \bar Q$, from which, in particular we get that
\begin{equation*}
u(x_1, t) \leq L |x_1 - x_0| \E(t) \quad \mbox{for all} \ t.
\end{equation*}

\medskip

\noindent{\it Step 4. Proof by induction for $U_i$, $i\geq 1$.} %If we assume the convention that definition~\eqref{inductiveUi} for $i = 0$ is performed defining as zero the sum from $1$ to $0$, then this definition meets $U_0$ of the previous step.
By induction, we will prove that $U_i$ in~\eqref{inductiveUi} satisfies
\begin{align}\label{compuU}
\partial_i^\alpha U_i + H(x, DU_i) \geq 0 \quad \mbox{in} \ Q_i, \qquad u \leq U_{i} \quad \mbox{in} \ \partial_p Q_i,
\end{align}
where $\partial_p Q_i$ is the parabolic boundary $\{ x_i \} \times (0, t) \cup \T^N \times \{ 0 \}$.

We first deal with the Cauchy-Dirichlet condition. By assumption, we have
$0\leq u(x_i,t)\leq U_{i-1}(x_i,t)$ for every $t\geq 0$. When $i=1$, it follows
\begin{eqnarray*}
&& u(x_1,t)\leq U_0(x_1,t)=L|x_1-x_0|\E(t)\leq \sqrt{N}L\E(t)\leq U_1(x_1,t). 
\end{eqnarray*}  
When $i\geq 2$, we have
\begin{eqnarray*}
&& u(x_i,t)\leq U_{i-1}(x_i,t)=L\sum_{j=1}^{i-1}|x_j-x_{j-1}| \E(t)
+ L|x_i-x_{i-1}|\E(t)=U_i(x_i,t). 
\end{eqnarray*} 
For $t=0$ and all $x\in\T^N$, we have, using the triangle inequality,
$$
U_i(x,0) =L\sum_{j=1}^{i}|x_j-x_{j-1}| \E(0)+ L|x-x_i|\E(0)
\geq L|x-x_0|\geq g(x)-g(x_0)=g(x),
$$
so the initial condition is satisfied.

\medskip

Now we deal with the PDE in~\eqref{compuU}. 
%It is therefore enough to prove that $U_i$ is a supersolution of~\eqref{eq}-\eqref{u0} in
%$Q_i:=Q\setminus (\{x_i\}\times [0,+\infty))$
%to recover~\eqref{compuU} in $Q$ by applying the Cauchy-Dirichlet comparison in
%$Q_i$. {\br REFERENCES OR COMMENTS.}
The proof follows the same
lines as the one in Step 4, so we only sketch it.

If $t>0$ and $x\not\in [x_i,x_{i+1}]$, then $U_i$ is $C^1$ at $(x,t)$ and we can
do the same computation as in Step 4 to obtain
\begin{align*}
 \partial_t&^\alpha U_i+F(x,DU_i)-f(x)\\
&= 
L\ell(\gamma)\partial_t^\alpha \E(t)
+ L |x-x_i|\partial_t^\alpha \E(t)+F\left(x,L\widehat{x-x_i} \E(t)+M Dd_{[x_i,x_{i+1}](x)}\right)-f(x)\\
&\geq
L(\ell(\gamma)+\sqrt{N})\partial_t^\alpha \E(t)
+\frac{M-L}{C_H}-C_H-||f||_\infty\\
&\geq
L(\ell(\gamma)+\sqrt{N})\left( \partial_t^\alpha \E(t) +A |\E(t)|^k\right)
+ \frac{M}{C_H}- \frac{L}{C_H}-C_H-||f||_\infty- L(\ell(\gamma)+\sqrt{N})A|\E(t)|^k.
\end{align*}

By the choice of $M$ in~\eqref{defM}, recalling that $A\leq 1$,
we obtain that the right hand side of the above inequality is nonnegative.

Now, let  $t>0$, $x\in [x_i,x_{i+1}]\setminus\{x_i\}$
and consider any $C^1$ test-function $\varphi$ touching $U_i$ from below at $(x,t)$, i.e.,
$U_i(y,s)\geq \varphi(y,s)$ and $U_i(x,t)=\varphi(x,t)$. 
As in Step 4, we check easily that $\partial_t^\alpha \varphi (x,t)\geq \partial_t^\alpha U_i (x,t)$
and, since $d_{[x_i,x_{i+1}]}(x-h \widehat{x-x_i})=0$ for $h>0$ small enough, that
$|D\varphi(x,t)|\geq L\E(t)$.
Moreover, $|D\varphi(x,t)|\leq L+M$.
It follows
\begin{align*}
\partial_t^\alpha \varphi (x,t)&+F(x,D\varphi(x,t))-f(x)\\
&\geq
L(\ell(\gamma)+\sqrt{N})  
\left( \partial_t^\alpha \E(t) +A |\E(t)|^k\right)
- L(\ell(\gamma)+\sqrt{N})A|\E(t)|^k+  \nu_{L+M}L^k |\E(t)|^k\\
&\geq
L \left(\nu_{L+M}L^{k-1}- (\ell(\gamma)+\sqrt{N})A\right)|\E(t)|^k
\end{align*}
recalling that $f=0$ on $[x_i,x_{i+1}]$.
By the choice of $A$ in~\eqref{defA},
the right hand side of the above inequality is nonnegative.

It ends the proof of the supersolution property for $U_i$, and therefore the inductive process.

Then, using Cauchy-Dirichlet comparison principles in~\cite{namba18} (or standard adaptation of the comparison principles presented in~\cite{ty17} when the Dirichlet condition is satisfied pointwisely), we conclude that $u \leq U_i$ in $Q_i$, from which we get $u(z,t) \leq U_i(z,t)$ for all $t > 0$. This concludes the proof.
\end{proof}
%%%%%%%%%%%%%%%%%%%%%%%

%%%%%%%%%%%%%%%%%%%%%%%%%%%%%
\begin{proof}[Proof of Theorem~\ref{cvusurZ1}]
We may assume without loss of generality that ${\rm min}\, g=0$. It follows
that $0\leq u$ in $Q$ and we need to prove that $u(x,t)\to 0$ on $Z$
as $t\to +\infty$.

Let $\e>0$ and $f_\e:= (f-C_f \e)_+$, where $C_f:={\rm Lip}(f)$. Then
$f_\e$ is a periodic function that satisfies the following properties,
\begin{eqnarray*}
&& 0\leq f_\e\leq f, \qquad ||f_\e - f||_\infty \leq C_f\e,\\
&& Z=\{ f=0\}\subset \{d_Z(x)\leq \e\}\subset Z_\e=\{f_\e=0\}.  
\end{eqnarray*}  
To prove the last inclusion, let $x\in \T^N$ such that $d_Z(x)\leq\e$.
Then there exists $x_Z\in Z$ such that $|x-x_Z|\leq \e$ and
$0\leq f(x)\leq f(x_Z)+C_f|x-x_Z|\leq C_f\e$. Thus $f_\e(x)=0$.

Now we consider~\eqref{eq-new}-\eqref{u0} by replacing $f$ with $f_\e$.
Notice that the assumptions of Theorem~\ref{cvusurZ1} and Lemma~\ref{cvusurZ}
still holds. In particular, there exists a unique solution $u_\e$.
Moreover, $u_\e\pm ||f-f_\e|| c_{\alpha,\alpha}^{-1}t^\alpha$, where $c_{\alpha,\alpha}$
appears in~\eqref{cst-deriv},
are respectively a super- and a subsolution of~\eqref{eq-new}-\eqref{u0} with $f$.
By comparison, we get
\begin{eqnarray}\label{uueps}
&& ||u-u_\e||\leq \frac{C_f}{c_{\alpha,\alpha}} \e t^\alpha. 
\end{eqnarray}   
Let $z\in Z$ and $\gamma:[0,1]\to Z$ be a rectifiable polygonal
curve such that $\gamma(0)=x_0\in Z \cap \mathrm{argmin} \{ g \}$ and $\gamma(1)=z$.
By assumption, there exists a sequence of subdivision $t_0^k:=0 < t_1^k <\cdots < t_{n_k}^k:=1$,
$k\in\N$ and a finite polygonal line $\gamma_k:=[\gamma(t_0^k),\cdots,\gamma(t_{n_k}^k)]$
which satisfies $\ell (\gamma_k) \nearrow \ell(\gamma)$ as $k\to\infty$.

In particular, we can prove that ${\rm dist}(\gamma_k, \gamma)\to 0$ as $k\to\infty$.
It follows that there exists $k_\e\in\N$ such that $\gamma_{k_\e}\subset Z_\e$
and $\gamma_{k_\e}$ is a finite polygonal line joining $x_0$ and $z$.
We can apply Lemma~\ref{cvusurZ} to obtain
\begin{eqnarray}\label{estimueps}
0\leq u_\e(z,t)\leq {\rm Lip}(g)\ell (\gamma_{k_\e})\E(t), \qquad \text{for all $t\geq 0$.} 
\end{eqnarray}
A priori, $\E$ depends on $\gamma_{k_\e}$ through the dependence of $A$ with respect to
$\ell(\gamma_{k_\e})$ in~\eqref{edocaputo} but, since $\ell (\gamma_k)\leq \ell(\gamma)<+\infty$,
we can fix $A$ and $\E$ independently of $\e$. From~\eqref{uueps}, we finally
obtain
\begin{eqnarray*}
0\leq u(z,t)\leq \frac{C_f}{c_{\alpha,\alpha}} \e t^\alpha+{\rm Lip}(g)\ell (\gamma)\E(t), \qquad \text{for all $t\geq 0$.} 
\end{eqnarray*}
Sending first $\e\to 0$ and then $t\to +\infty$, we conclude.
\end{proof}
%%%%%%%%%%%%%%%%%%%%%%%

In the Eikonal case (c.f.~\eqref{Heikonal}), we have an explicit formula for $\E$ in Lemma~\ref{cvusurZ},
see Remark~\ref{rem-ode}, which allows to deal with
more involved $Z$. More precisely, we consider subsets $Z$ satisfying
the following assumption
\begin{eqnarray}
\label{hyp-box-count}
&&\begin{array}{c}
\text{There exists $D\geq 1$ and $C>0$ such that, for all $\e>0$ and $x\in Z(\e):=\{d_Z<\e\}$,}\\
\text{there exists $x_0\in Z \cap \mathrm{argmin} \{ g \}$ and a finite polygonal line $\gamma_\e\subset Z(\e)$}\\
\text{such that $\gamma_\e$ is formed by at most $C\e^{-D}$ lines of length $\e$.}
\end{array}  
\end{eqnarray}

We remark that a set $Z$ satisfying Assumption~\eqref{hyp-box-count} is a curve of box-counting dimension $D$,
see Falconer~\cite{falconer90}. In several interesting cases, box-counting dimension agrees with Hausdorff dimension
and when $D > 1$ then the curve have infinite length.

%%%%%%%%%%%%
\begin{teo} \label{cvZ-eikonal} (Eikonal case)
Assume~\eqref{H},~\eqref{NR},~\eqref{HsmoothR} with $k = 1$, and~\eqref{hyp-box-count} for $1\leq D <\frac{3}{2}$.
Then, the unique solution $u$ to~\eqref{eq}-\eqref{u0} converges on $Z$, i.e.,
\begin{eqnarray*}
&& \text{for every  $x\in Z$,  $u(x,t)\to {\rm min}\, \{ g \}$ as $t\to +\infty$.}
\end{eqnarray*}
\end{teo}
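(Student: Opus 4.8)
The proof of Theorem~\ref{cvZ-eikonal} should follow the same scheme as the proof of Theorem~\ref{cvusurZ1}, but exploiting the explicit Mittag-Leffler bound for $\E$ from Remark~\ref{rem-ode} to tolerate polygonal curves of growing length. As before, I would assume $\min g = 0$, so $0 \le u$ in $Q$, and fix $z \in Z$. For $\e > 0$, assumption~\eqref{hyp-box-count} furnishes a point $x_0 \in Z \cap \mathrm{argmin}\{g\}$ and a finite polygonal line $\gamma_\e \subset Z(\e)$ joining $x_0$ to $z$, made of at most $C\e^{-D}$ segments each of length $\e$, hence of total length $\ell(\gamma_\e) \le C\e^{1-D}$. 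I would then introduce $f_\e := (f - C_f\e)_+$ as in the proof of Theorem~\ref{cvusurZ1}, so that $\gamma_\e$ lies in $Z_\e = \{f_\e = 0\}$, solve~\eqref{eq-new}-\eqref{u0} with $f_\e$ in place of $f$, obtaining $u_\e$ with $\|u - u_\e\|_\infty \le \frac{C_f}{c_{\alpha,\alpha}} \e\, t^\alpha$ by comparison with $u_\e \pm \|f - f_\e\|_\infty c_{\alpha,\alpha}^{-1} t^\alpha$.

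\textbf{Applying the Lemma with controlled constants.} The key point is to track how the function $\E$ in Lemma~\ref{cvusurZ} depends on $\ell(\gamma_\e)$. In the Eikonal case $k=1$, the ODE~\eqref{edocaputo} reads $\partial_t^\alpha \E + A\E = 0$ with solution $\E(t) = E_\alpha(-At^\alpha)$, and from~\eqref{defA} with $k=1$ we have $A = \nu_{L+M}/(\sqrt N + \ell(\gamma_\e))$. Since $M$ in~\eqref{defM} grows linearly in $\ell(\gamma_\e)$ and $\nu_{L+M}$ may only shrink as $\ell(\gamma_\e) \to \infty$, one has $A \ge c_0 / \ell(\gamma_\e)^{1+\sigma}$ for suitable constants — but actually it is cleaner to bound $A \ge c_1 \ell(\gamma_\e)^{-1}$ up to the (harmless, since $L+M \le C\ell(\gamma_\e)$) dependence of $\nu$; in the genuinely Eikonal setting~\eqref{Heikonal} where $F(x,p) = a(x)|p|$ one has $\nu_R = \underline a$ independent of $R$, so exactly $A = \underline a/(\sqrt N + \ell(\gamma_\e))$. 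Lemma~\ref{cvusurZ} then gives
\begin{equation*}
0 \le u_\e(z,t) \le L\, \ell(\gamma_\e)\, E_\alpha(-A t^\alpha),
\end{equation*}
and using the upper bound $E_\alpha(-s) \le \Gamma(1+\alpha)^{-1}/s$ from~\eqref{estim-mlf} together with $\ell(\gamma_\e) \le C\e^{1-D}$ and $A \ge c_1 \e^{D-1}$ (from $\ell(\gamma_\e) \le C\e^{1-D}$), we obtain a bound of the form
\begin{equation*}
0 \le u_\e(z,t) \le \frac{C\, \ell(\gamma_\e)^2}{t^\alpha} \le \frac{C\, \e^{2(1-D)}}{t^\alpha}.
\end{equation*}

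\textbf{Optimizing in $\e$ and the main obstacle.} Combining with $\|u - u_\e\|_\infty \le C\e t^\alpha$ yields
\begin{equation*}
0 \le u(z,t) \le C\e t^\alpha + C \e^{2(1-D)} t^{-\alpha}.
\end{equation*}
Optimizing over $\e > 0$: balancing $\e t^\alpha$ against $\e^{2(1-D)} t^{-\alpha}$, the exponent $2(1-D) < 0$ since $D \ge 1$, so we choose $\e = t^{-\theta}$ for appropriate $\theta > 0$ and get $u(z,t) \le C t^{\alpha - \theta} + C t^{-\alpha + 2(D-1)\theta}$; this tends to $0$ provided we can pick $\theta$ with $\theta > \alpha$ and $2(D-1)\theta < \alpha$, i.e. $\alpha < \theta < \frac{\alpha}{2(D-1)}$, which is nonempty precisely when $2(D-1) < 1$, that is $D < \tfrac32$. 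This is exactly the hypothesis, so the argument closes; sending $t \to +\infty$ gives $u(z,t) \to 0$. The main obstacle — and the reason this is restricted to the Eikonal case — is the need for the sharp two-sided control~\eqref{estim-mlf} on $\E$: for general $k \ge 1$ Lemma~\ref{frac-ode} only provides $\E(t) \le C_\e t^{-\alpha/k + \e}$, whose loss of $t^\e$ (and worse constants) would not survive the above optimization where the exponents must be compared precisely; keeping careful track of how the constant $A$, hence the decay rate of $\E$, degrades as $\ell(\gamma_\e) \to \infty$ is the delicate bookkeeping step.
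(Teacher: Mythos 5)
Your proposal is correct and follows essentially the same route as the paper: approximate $f$ by $f_\e=(f-C_f\e)_+$, apply Lemma~\ref{cvusurZ} along the polygonal $\gamma_\e$ of length $\lesssim\e^{1-D}$ with the explicit Mittag-Leffler bound~\eqref{estim-mlf} so that $A\gtrsim \ell(\gamma_\e)^{-1}$, arrive at $0\le u(z,t)\le C\bigl(\e t^\alpha+\e^{2(1-D)}t^{-\alpha}\bigr)$, and optimize in $\e$, which closes exactly when $D<\tfrac32$. The only differences are cosmetic: the paper minimizes exactly over $\e$ to get the explicit rate $t^{\alpha\frac{2D-3}{2D-1}}$ whereas you take $\e=t^{-\theta}$ and only conclude convergence, and your explicit remark that $\nu_R$ is uniform in $R$ in the Eikonal setting addresses a point the paper leaves implicit.
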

%%%%%%%%%%%%

%%%%%%%%%%%%%%%%%%%%%%%%%%%%%
\begin{proof}[Proof of Theorem~\ref{cvZ-eikonal}]
We proceed exactly as in the proof of Theorem~\ref{cvusurZ1}, where $\gamma_{k_\e}$
is replacing with $\gamma_\e$ given by Assumption~\eqref{hyp-box-count}.
From~\eqref{uueps} and~\eqref{estimueps}, we arrive at
\begin{eqnarray}\label{estim945}
&& 0\leq u(z,t)\leq \frac{C_f}{c_{\alpha,\alpha}} \e t^\alpha
+ {\rm Lip}(g)\ell (\gamma_{\e})\E(t), \qquad \text{for all $t\geq 0$.} 
\end{eqnarray}
The difference with the proof of Theorem~\ref{cvusurZ1} is that $\gamma$
is not necessarily rectifiable anymore. But we can estimate the length
of $\gamma_\e$ thanks
to~\eqref{hyp-box-count}
and take profit of the explicit formula for the solution
of~\eqref{edocaputo} in the Eikonal case $k=1$.

The constant $C>0$ below may change line to line but it does not depend
neither on $\e$ nor on $t$.
We have
\begin{eqnarray*}
&& \ell (\gamma_{\e})\leq C\e^{-D}\e, \qquad\text{from Assumption~\eqref{hyp-box-count},}\\   
&& \E(t)=E_\alpha(-At^\alpha)\leq \frac{C}{At^\alpha}, \qquad\text{from Remark~\ref{rem-ode} and~\eqref{estim-mlf}},\\
&& A\geq \frac{1}{C \ell (\gamma_{\e})}, \qquad\text{from~\eqref{defA} and Assumption~\eqref{HsmoothR} with $k = 1$}.
\end{eqnarray*}
Plugging all the estimates in~\eqref{estim945}, we end with
\begin{eqnarray}\label{estim520}
&& 0\leq u(z,t)\leq  C\left(\e t^\alpha+\frac{1}{\e^{2(D-1)}t^\alpha}\right), \quad\text{for all $t\geq 0$, $\e>0$.}
\end{eqnarray}
Minimizing over $\e>0$, we obtain 
\begin{eqnarray*}
&& 0\leq u(z,t)\leq  C t^{\alpha \frac{2D-3}{2D-1}}, 
\end{eqnarray*}
and the right hand side tends to 0 as $t\to +\infty$ when $D<\frac{3}{2}$.
\end{proof}
%%%%%%%%%%%%%%%%%%%%%%%

%%%%
\begin{remark}\label{classique-eikonal}
Notice that the condition on $D$ does not depend on $\alpha\in (0,1)$.
If we use the same approach in the classical case ($\alpha=1$), then we can repeat
the above proof with $\E(t)=e^{-At}$ and the right hand side of~\eqref{estim520} now reads
$C\e t + \e^{1-D} e^{-t\e^{D-1}/C}$. We can prove that the minimum over $\e>0$ tends to 0
as $t\to +\infty$ if and only if $D<2$. Even, if we obtain a more general result than in the fractional
case $\alpha\in (0,1)$, this result is not optimal since the convergence on $Z$ holds
for any $Z$ and ${\rm argmin}\{g\}$ even without
any connectedness requirement (see Introduction).
\end{remark}
%%%%

%%%%%%%%
\begin{cor}\label{ltb-caputo}
Under the assumptions of Theorem~\ref{cvusurZ1} or~\ref{cvZ-eikonal},
the unique solution $u$ of~\eqref{eq}-\eqref{u0} satisfies
\begin{eqnarray*}
u(x,t)+ct^\alpha -v(x) \to 0 \quad\text{uniformly as $t\to +\infty$,}
\end{eqnarray*}
where $c= -\mathop{\rm min}_{\T^N}f$ and $v$ is the unique solution of~\eqref{ergodic}
satisfying $v={\rm min}_{\T^N}g$ on $Z$.
\end{cor}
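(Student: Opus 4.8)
The plan is to carry out the third step of the Namah--Roquejoffre scheme recalled in the Introduction, Steps~1 and~2 being already provided by Theorems~\ref{teoHoldert}--\ref{holderx} and by Theorems~\ref{cvusurZ1}/\ref{cvZ-eikonal}. As in the reductions used throughout Section~\ref{secLTB}, a change of unknown (adding a suitable multiple of $t^\alpha$) lets us assume $\min_{\T^N}f=0$ and $\min_{\T^N}g=0$, so that $c=0$ and it remains to prove $u(\cdot,t)\to v$ uniformly on $\T^N$, where $v$ solves $F(x,Dv)=f(x)$ with $v=0$ on $Z$. First I would observe that $u$ is globally bounded: taking any $W^{1,\infty}$ solution $v_0$ of the ergodic equation $F(x,Dv_0)=f(x)$ (which exists by~\cite{lpv86}), the functions $v_0+\max_{\T^N}(g-v_0)$ and $v_0+\min_{\T^N}(g-v_0)$ have vanishing Caputo derivative and solve the stationary equation, hence are respectively a super- and a subsolution of~\eqref{eq-new} lying above, resp. below, $g$ at $t=0$; the comparison principle of~\cite{ty17} then sandwiches $u$ between them. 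Therefore Theorem~\ref{teoHoldert} and Theorem~\ref{holderx} apply: $u$ is bounded, $\alpha$--H\"older in time, and equicontinuous in space with a modulus $m$, all uniformly in $t\ge0$.

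Next I would pass to the half-relaxed limits
\begin{equation*}
\overline u(x)=\limsup_{y\to x,\ \tau\to+\infty}u(y,\tau),
\qquad
\underline u(x)=\liminf_{y\to x,\ \tau\to+\infty}u(y,\tau),
\end{equation*}
which are finite, with $\overline u$ upper semicontinuous, $\underline u$ lower semicontinuous, and $\underline u\le\overline u$ on $\T^N$. Since~\eqref{eq} is autonomous in time and the Caputo derivative rescales as $\partial_t^\alpha[\phi(\lambda\,\cdot)](t)=\lambda^{-\alpha}(\partial_t^\alpha\phi)(\lambda t)$, the functions $u_\e(x,t):=u(x,t/\e)$ are viscosity solutions of $\e^\alpha\partial_t^\alpha u_\e+H(x,Du_\e)=0$ in $\T^N\times(0,+\infty)$, and $\overline u,\underline u$ are precisely their (time-independent) half-relaxed limits as $\e\to0$. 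The crucial claim is that $\overline u$ is a viscosity subsolution and $\underline u$ a viscosity supersolution of $F(x,Dv)=f(x)$ on all of $\T^N$ (equivalently $H(x,Dv)=0$, recall $c=0$); this is the only genuinely new point compared with the classical case, and I expect it to be the main obstacle. The point is that the nonlocal time term carries a vanishing prefactor: given $\phi\in C^1(\T^N)$ such that $\overline u-\phi$ has a strict maximum at $x_0$, the standard half-relaxed-limit argument produces, along a sequence $\e\to0$, maxima $(\bar x_\e,\bar s_\e)$ of $u_\e(x,t)-\phi(x)-|t-1|^2$ over $\T^N\times[1/2,3/2]$ with $\bar x_\e\to x_0$, $\bar s_\e\to1$; testing the subsolution $u_\e$ there gives, for every fixed $\delta\in(0,1/2)$,
\begin{equation*}
H(\bar x_\e,D\phi(\bar x_\e))\le -\e^\alpha\Big(\partial_t^\alpha[\bar s_\e-\delta]\,u_\e(\bar x_\e,\cdot)(\bar s_\e)+\partial_t^\alpha[\bar s_\e-\delta,\bar s_\e]\big(|{\cdot}-1|^2\big)(\bar s_\e)\Big)\le C\e^\alpha\big(||u||_\infty\,\delta^{-\alpha}+1\big),
\end{equation*}
so, letting $\e\to0$ with $\delta$ kept fixed, $H(x_0,D\phi(x_0))\le0$; the supersolution property of $\underline u$ is symmetric.

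It then remains to conclude by comparison, exactly as in~\cite{nr99}. From Theorem~\ref{cvusurZ1} (or Theorem~\ref{cvZ-eikonal} in the Eikonal case), $u(x,t)\to0=\min_{\T^N}g$ for every $x\in Z$; combined with the uniform spatial modulus $m$ of Theorem~\ref{holderx} (so $u(y,t)\le u(x,t)+m(|x-y|)$ with $x\in Z$), this gives $\overline u=\underline u=0$ on $Z$, while $u\ge0$ in $Q$ (the constant $0$ being a subsolution of~\eqref{eq-new} lying below $g$ at $t=0$) gives $\underline u\ge0$ on $\T^N$. Now I invoke the Namah--Roquejoffre comparison principle for the stationary equation $F(x,Dv)=f(x)$ under~\eqref{NR}: a subsolution lying below a supersolution on $Z=\{f=0\}$ lies below it on all of $\T^N$ (see~\cite{nr99}, and also~\cite{abil13}). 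Applied to $\overline u$ and $\underline u$ it yields $\overline u\le\underline u$, hence $\overline u=\underline u=:v$ on $\T^N$; this $v$ is continuous, solves $F(x,Dv)=f(x)$, vanishes on $Z$, and is the unique such solution (by comparison once more). Thus $u(x,t)\to v(x)$ for every $x\in\T^N$, and pointwise convergence together with the equicontinuity of $\{u(\cdot,t)\}_{t\ge0}$ and the compactness of $\T^N$ upgrades this to uniform convergence. Undoing the normalization then gives~\eqref{asympt-exp}, which is the assertion of the corollary.
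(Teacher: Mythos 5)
Your proposal is correct and follows essentially the same route as the paper's (sketched) proof: the three-step Namah--Roquejoffre scheme, with boundedness and Step 1 from comparison plus Theorems~\ref{teoHoldert}--\ref{holderx}, Step 2 from Theorem~\ref{cvusurZ1} (or~\ref{cvZ-eikonal}), and Step 3 via half-relaxed limits of the rescaled solutions and the strong comparison result for the stationary equation with Dirichlet data on $Z$. The only slip is the stated scaling identity, which should read $\partial_t^\alpha[\phi(\lambda\,\cdot)](t)=\lambda^{\alpha}(\partial_t^\alpha\phi)(\lambda t)$ (exponent $+\alpha$); this corrected form is precisely what you in fact use when concluding that $u_\epsilon(x,t)=u(x,t/\epsilon)$ solves $\epsilon^\alpha\partial_t^\alpha u_\epsilon+H(x,Du_\epsilon)=0$, so the rest of the argument is unaffected.
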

%%%%%%%

The proof of the corollary follows the procedure
described in the introduction.  We only sktech it.
By comparison, we obtain that $u+ct^\alpha$ is uniformly bounded
in $\T^N\times [0,+\infty)$. We then apply Theorems~\ref{teoHoldert} and~\ref{holderx}  
to  $u+ct^\alpha$ to prove Step 1. Step 2 follows from Theorem~\ref{cvusurZ1} or~\ref{cvZ-eikonal}
and Step 3 is classical. For details, we refer the reader
to the survey of Barles in~\cite{abil13} or Namah-Roquejoffre~\cite{nr99}.
\bigskip

\noindent {\bf Acknowledgements.} 
Part of this work was done during a visit of E.T. and M.Y. to the Institut de Recherche Math\'ematique de Rennes. They acknowledge the hospitality of the Institut.
O.L. is partially supported by the Agence Nationale de la Recherche
(MFG project ANR-16-CE40-0015-01 and Centre Henri Lebesgue ANR-11-LABX-0020-01).
E.T. was partially supported by  Conicyt PIA Grant No. 79150056, Foncecyt Iniciaci\'on No. 11160817, and Dicyt - Apoyo Asistencia a Eventos 2018. M.Y. is partially supported by Escuela Polit\'ecnica Nacional, Proyecto PII-DM-2019-01.

%%%%%%%%%%%%%%%%%%%%%%%%%%%%%%%%%%%% 
%%% BIBLIO AVEC BIBTEX
%\bibliographystyle{plain} 
%\bibliography{../../biblio} 
%\end{document}

\end{document}